\numberwithin{equation}{section}
\newtheorem{theorem}{Theorem}[section]
\newtheorem{proposition}[theorem]{Proposition}
\theoremstyle{definition}
\newtheorem{definition}[theorem]{Definition}
\newtheorem{remark}[theorem]{Remark}
\theoremstyle{remark}
\date{}
\begin{document}
\title{Landsberg curvature of twisted product Finsler metric}
\author{Lize Bian,\quad Yong He$^{*}$,\quad Jianghui Han  \\
\footnotesize School of Mathematical Sciences, XinJiang Normal
University, Urumqi 830017, PR China}
\maketitle
\par\textbf{Abstract:} Let $(M_1,F_1)$ and $(M_2,F_2)$ be two Finsler manifolds. The twisted product Finsler metric of $F_{1}$ and $F_{2}$ is a Finsler metric $F=\sqrt{F_1^2+f^2F_2^2}$ endowed on the product manifold $M_1\times M_2$, where $f$ is a positive smooth function on $M_1\times M_2$. In this paper, Landsberg curvature and mean Landsberg curvature of the twisted product Finsler metric are derived. Necessary and sufficient conditions for the twisted product Finsler metric to be Landsberg metric or weakly Landsberg metric are obtained. And it is proved that every twisted product Finsler metric has relatively isotropic Landsberg (resp. relatively isotropic mean Landsberg) curvature if and only if it is a Landsberg (resp. weakly Landsberg) metric.\\
\par\textbf{Keywords:} twisted product, Finsler metric, Landsberg curvature, mean Landsberg curvature, relatively isotropic Landsberg curvature, relatively isotropic mean Landsberg curvature\\
\par\textbf{MSC(2020):} 53B40, 53C60 \\
\footnote{$^*$ Corresponding author\\
E-mail address: heyong@xjnu.edu.cn}

\section{Introduction}
In Finsler geometry, there are several important non-Riemannian quantities so that Finsler geometry is much more colorful than Riemannian geometry. Therefore, it is meaningful to study non-Riemannian quantities. Cartan torsion $C$ is a primary non-Riemannian quantity. Differentiating $C$ along geodesics gives rise to Landsberg curvature $L$ \cite{S}. The trace of $C$ and $L$ are mean Cartan torsion $I$ and mean Landsberg curvature $J$, respectively. Mean Landsberg curvature $J$ is the rate of change of mean Cartan torsion $I$ along geodesics \cite{S}.

Berwald curvature is the most important non-Riemannian quantity. Berwald curvature of a Finsler metric always vanishes if and only if it is a Berwald metric \cite{S}. Landsberg metric, i.e. Landsberg curvature always vanishes, is more general than Berwald metric. It is known that every Berwald metric is a Landsberg metric, conversely it may not be true. Because on a Berwald manifold, all tangent spaces with the induced norm are linearly isometric to each other, while on a Landsberg manifold, all tangent spaces with the induced norm are all isometric.

In fact, searching non-Berwaldian Landsberg metric is called the unicorn problem in Finsler geometry \cite{Bao}. There is no regular non-Berwaldian Landsberg metric was found till now \cite{ZWL}. However, if the metric is almost regular (allowed to be singular in some directions), some non-Berwaldian Landsberg metrics were found in the past years. More specifically, Zhou, Wang and Li \cite{ZWL} constructed almost regular Landsberg general $(\alpha, \beta)$-metrics, Shen \cite{Shen} showed some examples of almost regular Landsberg $(\alpha, \beta)$-metrics, certainly, all of them are non-Berwaldian Landsberg metric.

In recent years, warped product are widely used to construct Riemannian metrics and Finsler metrics with special curvature properties. Warped product was first introduced by Bishop and O'Neill \cite{BO} to construct Riemannian manifolds with negative sectional curvature. The notion of warped product was extended to Finsler spaces by Asanov \cite{GSA1,GSA2}. Hushmandi and Rezaii \cite{HR1} obtained equivalent conditions for a proper (nontrivial) warped product Finsler metric to be a Landsberg metric, that is, two Finsler metrics that constitute the warped product Finsler metric, one is Landsberg metric, the other reduces to Riemannian metric, and an equation holds. Peyghan, Tayebi and Najafi \cite{PTN1} obtained necessary and sufficient conditions for a proper warped product Finsler metric to be a weakly Landsberg metric, similarly, one Finsler metric is weakly Landsberg metric, the other reduces to Riemannian metric, and an equation holds. And they proved that every doubly warped product Finsler metric with relatively isotropic Landsberg (resp. relatively isotropic mean Landsberg) curvature is Landsberg (resp. weakly Landsberg) metric. He and Zhong \cite{HC} proved that the doubly warped product complex Finsler manifold is complex Landsberg manifold if and only if its component manifolds are complex Landsberg manifolds and the complex warped function is positive constant.

Twisted product is a natural generalization of warped product. The notion of twisted product of Riemannian manifolds was first mentioned by Chen \cite{CBY} and was extended to pseudo-Riemannian case by Ponge and Reckziegel \cite{PR}. Chen \cite{C} investigated the twisted product CR-submanifolds of K\"ahler manifolds. Kozma, Peter and Shimada \cite{KPS} extended the twisted product to Finsler geometry and studied the Cartan connection, geodesics and completeness of the twisted product Finsler manifold. Peyghan, Tayebi and Nourmohammadi \cite{PTN2} obtained necessary and sufficient conditions for the twisted product Finsler metric to be Berwald metric or weakly Berwald metric, respectively. Most recently, Xiao and He et al. \cite{Xiao1} obtained necessary and sufficient conditions for a doubly twisted product complex Finsler manifold to be a complex Landsberg manifold. More details for the twisted product Finsler metric in \cite{Deng1,Deng2,Xiao2,Xiao3}.

Thus it is a natural question to find necessary and sufficient conditions for the twisted product Finsler metric to be a Landsberg metric or weakly Landsberg metric, our purpose of doing these is to study the possibility of constructing some special Finsler metrics such as Landsberg metric and weakly Landsberg metric. And we wonder what properties of the twisted product Finsler metric with relatively isotropic Landsberg (resp. relatively isotropic mean Landsberg) curvature are.

The paper is organised as follows. In Section 2, we recall some basic concepts and notations of Finsler geometry and adjust the index system of the twisted product Finsler metric. In Section 3, we will derive mean Cartan torsion coefficients of the twisted product Finsler metric. In Section 4 and Section 5, we will derive Landsberg curvature coefficients and mean Landsberg curvature coefficients of the twisted product Finsler metric, and give necessary and sufficient conditions for the twisted product Finsler metric to be Landsberg metric or weakly Landsberg metric. Finally, we prove that every twisted product Finsler metric has relatively isotropic Landsberg (resp. relatively isotropic mean Landsberg) curvature if and only if it is a Landsberg (resp. weakly Landsberg) metric.

\section{Preliminary}
In this section, we briefly recall some basic concepts and notations which we need in this paper.

Let $M$ be an n-dimensional $C^{\infty}$ manifold. Denote $x=(x^1, \cdots, x^n)$ the local coordinates on $M$, and $(x, y)=(x^1, \cdots, x^n, y^1, \cdots, y^n)$ the induced local coordinates on the tangent bundle $TM$ of $M$.
\begin{definition}\cite{BCS}~~A Finsler metric of $M$ is a function $F: TM\rightarrow[0, \infty)$ with the following properties

(i) $F$ is smooth on the slit tangent bundle $TM-\{\textrm{zero}~\textrm{section}\}$,

(ii) $F(x,\lambda y)=\lambda F(x,y)$ for all $\lambda>0$,

(iii) The $n\times n$ Hessian matrix

\begin{equation*}
(g_{ij})=\Big(\Big[\frac{1}{2}F^{2}\Big]_{y^{i}y^{j}}\Big)
\end{equation*}
is positive definite on $TM-\{\textrm{zero}~\textrm{section}\}$.
\end{definition}

Denote $(g^{ik})$ the inverse matrix of $(g_{kj})$, such that $g^{ik}g_{kj}=\delta^{i}_{~j}$. And $y_{i}=g_{ij}y^{j}$.

Every Finsler metric $F$ induces a spray $\mathbb{G}:=y^i\frac{\partial}{\partial x^i}-2\mathbb{G}^i\frac{\partial}{\partial y^i}$. The spray coefficients $\mathbb{G}^i$ are defined by

\begin{equation*}
  \mathbb{G}^i:=\frac{1}{4}g^{ip}\{[F^2]_{x^qy^p}y^q-[F^2]_{x^p}\}.
\end{equation*}
The geodesics of $F$ characterized by local coordinates are given by

\begin{equation*}
\frac{d^{2}x^{i}}{dt^{2}}+2\mathbb{G}^{i}(x,\frac{dx}{dt})=0.
\end{equation*}

To distinguish between Finslerian and Riemannian, Cartan torsion $C:=C_{ijk}dx^{i}\otimes dx^{j}\otimes dx^{k}$ was introduced by Finsler \cite{F} and emphasized by Cartan \cite{Cartan}. The Cartan torsion coefficients $C_{ijk}$ are defined by

\begin{equation*}
C_{ijk}:=\frac{1}{2}\frac{\partial g_{ij}}{\partial y^{k}}.
\end{equation*}
It is well known that $C_{ijk}=0$ if and only if $F$ is Riemannian \cite{S}. Note that for $g^{ij}$, we have $C^{ij}_{~~k}:=-\frac{1}{2}\frac{\partial g^{ij}}{\partial y^{k}}$.

Mean Cartan torsion $I:=I_{i}dx^{i}$ is the trace of $C$. The mean Cartan torsion coefficients $I_{i}$ are defined by
\begin{equation}\label{Ii}
  I_{i}:=C_{ijk}g^{jk}.
\end{equation}
By Deicke's theorem, $F$ is Riemannian if and only if $I_{i}=0$ \cite{S}.

Throughout this paper, denote $``~|~"$ the horizontal covariant derivatives, and denote $``~;~"$ the vertical covariant derivatives such that $T^{j}_{~i;s}=\frac{\partial T^{j}_{~i}}{\partial y^{s}}$.

Landsberg curvature $L:=L_{ijk}dx^{i}\otimes dx^{j}\otimes dx^{k}$ is the rate of change of $C$ along geodesics, such that $L_{ijk}:=C_{ijk|m}y^{m}$. And $L_{ijk}$ can also be defined by
\begin{equation}\label{Lijk}
  L_{ijk}:=-\frac{1}{2}FF_{y^{l}}\frac{\partial^{3}\mathbb{G}^{l}}{\partial y^{i}\partial y^{j}\partial y^{k}}=-\frac{1}{2}FF_{y^{l}}B^{~l}_{i~jk}=-\frac{1}{2}g_{lm}y^{m}B^{~l}_{i~jk}=-\frac{1}{2}y_{l}B^{~l}_{i~jk},
\end{equation}
where $B^{~l}_{i~jk}$ are Berwald curvature coefficients. A Finsler metric is called Landsberg metric if and only if $L_{ijk}=0$ \cite{S}. It is easy to see that every Berwald metric is a Landsberg metric.

Mean Landsberg curvature $J:=J_{i}dx^{i}$ is the rate of change of $I$ along geodesics, such that $J_{i}:=I_{i|m}y^{m}$. Also $J_{i}$ is defined by
\begin{equation}\label{Ji}
  J_{i}:=L_{ijk}g^{jk}.
\end{equation}
A Finsler metric is called weakly Landsberg metric if and only if $J_{i}=0$ \cite{S}. Clearly, a Landsberg metric must be a weakly Landsberg metric, the converse does not hold in general. However in the case of dimension two, any weakly Landsberg metric must be a Landsberg metric \cite{LS}.

A Finsler metric $F$ is said to be of relatively isotropic Landsberg curvature and relatively isotropic mean Landsberg curvature if $L_{ijk}$ and $J_{i}$ satisfy the following equations, respectively

\begin{gather}
  L_{ijk}+c(x)FC_{ijk}=0,\label{isoL}\\
  J_{i}+c(x)FI_{i}=0,\label{isoJ}
\end{gather}
where $c=c(x)$ is a scalar function on $M$ \cite{CSD}. As well known, every Landsberg (resp. weakly Landsberg) metric has relatively isotropic Landsberg (resp. relatively isotropic mean Landsberg) curvature, such as $c(x)=0$, however the converse does not hold in general.

Let $(M_1,F_1)$ and $(M_2,F_2)$ be two Finsler manifolds with $dim M_1=m$ and $dim M_2=n$, then $M=M_1\times M_2$ is a Finsler manifold with $dim M=m+n$.

Throughout this paper we use the natural product coordinates system on the product manifold $M=M_{1}\times M_{2}$. Let $(p_{0},q_{0})$ be a point on $M$, then there are coordinate charts $(U,x_{1})$ and $(V,x_{2})$ on $M_{1}$ and $M_{2}$, respectively, such that $p_{0}\in U$ and $q_{0}\in V$. Thus we obtain a coordinate chart $(W,x)$ on $M$ such that $W=U\times V$ and $(p_{0},q_{0})\in W$, and for all $(p,q)\in W$, $x(p,q)=(x_{1}(p),x_{2}(q))$, where $x_{1}=(x^{1},\ldots,x^{m})$, $x_{2}=(x^{m+1},\ldots,x^{m+n})$.

Let $TM_{1}$, $TM_{2}$ and $TM$ are the tangent bundles of $M_{1}$, $M_{2}$ and $M$, respectively. Let $\pi_1:M_1\times M_2\rightarrow M_1$ and $\pi_2:M_1\times M_2\rightarrow M_2$ are natural projection maps, then $d\pi_1:T(M_1\times M_2)\rightarrow TM_1$ and $d\pi_2: T(M_1\times M_2)\rightarrow TM_2$ are the tangent maps induced by $\pi_1$ and $\pi_2$, respectively. Note that $d\pi_{1}(x,y)=(x_{1},y_{1})$ and $d\pi_{2}(x,y)=(x_{2},y_{2})$ for every $y=(y_{1},y_{2})\in T_{x}(M_{1}\times M_{2})$ with $y_{1}=(y^{1},\ldots,y^{m})\in T_{x_{1}}M_{1}$ and $y_{2}=(y^{m+1},\ldots,y^{m+n})\in T_{x_{2}}M_{2}$. Denote $\widetilde{M_{1}}=TM_{1}-\{\textrm{zero}~\textrm{section}\}$, $\widetilde{M_{2}}=TM_{2}-\{\textrm{zero}~\textrm{section}\}$ and $\widetilde{M}=\widetilde{M_{1}}\times \widetilde{M_{2}}\subset T(M_{1}\times M_{2})-\{\textrm{zero}~\textrm{section}\}$.

On the product manifold $M=M_{1}\times M_{2}$, we consider the following metric.
\begin{definition}\cite{KPS} Let $(M_1,F_1)$ and $(M_2,F_2)$ be two Finsler manifolds, and $f:M\rightarrow R^{+}$ be a smooth function. The twisted product of Finsler metrics $F_{1}$ and $F_{2}$ is a Finsler metric $F:\widetilde{M}\rightarrow R^{+}$ defined by
\begin{equation}\label{F}
  F(x,y)=\sqrt{F_{1}^{2}(\pi_1(x),d\pi_1(y))+f^{2}(\pi_1(x),\pi_2(x))F_{2}^{2}(\pi_2(x),d\pi_2(y))},
\end{equation}
where $x\in M$, $y\in \widetilde{M}$, $f$ is called the twisted function. In brief, $F$ is called the twisted product Finsler metric.
\end{definition}

Notice that it is natural to ask $F$ to be defined on $\widetilde{M}$ rather than on $T(M_{1}\times M_{2})-\{zero~section\}$, or on $\widetilde{M}_{1}\times TM_{2}$, or on $TM_{1}\times \widetilde{M}_{2}$, since $F_{i}$ is smooth on $TM_{i}$ if and only if $F_{i}$ comes from a Riemannian metric of $M_{i}$ for $i=1, 2$.

If function $f$ only depends on $M_1$, $F$ becomes warped product Finsler metric. If function $f$ only depends on $M_2$, $F$ becomes product Finsler metric. Particularly, $F$ is trivial if $f$ is constant. Unless otherwise stated, this paper studies the twisted product Finsler metric in general sense.

In order to express distinctly, we adjust the system of index which is different from that in \cite{KPS}. Throughout this paper, lowercase Greek indices such as $\alpha,\beta$ etc., will run from $1$ to $m+n$, lowercase Latin indices such as $i,j$ etc., will run from $1$ to $m$, lowercase Latin indices with a prime such as $i',j'$ etc., will run from $m+1$ to $m+n$. Quantities associated to $(M_1,F_1)$ and $(M_2,F_2)$ are denoted with upper indices $1$ and $2$, respectively, such as $\mathop{C_{ijk}}\limits^{1}$ and $\mathop{C_{i'j'k'}}\limits^{2}$.

The local coordinates $(x,y)$ on $\widetilde{M}$ are transformed by the rulers

\begin{align*}
  &\bar{x}^{i}=\bar{x}^{i}(x^{1},\cdots,x^{m}),~~~\bar{x}^{i'}=\bar{x}^{i'}(x^{m+1},\cdots,x^{m+n}), \\
  &\bar{y}^{i}=\frac{\partial\bar{x}^{i}}{\partial x^{j}}y^{j},~~~\bar{y}^{i'}=\frac{\partial\bar{x}^{i'}}{\partial x^{j'}}y^{j'}.
\end{align*}
For $\frac{\partial}{\partial y^{\alpha}}$, we have

\begin{equation*}
\frac{\partial}{\partial y^{i}}=\frac{\partial\bar{x}^{j}}{\partial x^{i}}\frac{\partial}{\partial\bar{y}^{j}},~~~\frac{\partial}{\partial y^{i'}}=\frac{\partial\bar{x}^{j'}}{\partial x^{i'}}\frac{\partial}{\partial\bar{y}^{j'}}.
\end{equation*}

Denote $g_{ij}=[\frac{1}{2}F_{1}^{2}]_{y^{i}y^{j}}$ and $h_{i'j'}=[\frac{1}{2}F_{2}^{2}]_{y^{i'}y^{j'}}$. Therefore, the fundamental tensor matrix $(G_{\alpha\beta})$ of the twisted product Finsler metric is given by

\begin{equation*}
(G_{\alpha\beta})=\Big(\Big[\frac{1}{2}F^{2}\Big]_{y^{\alpha}y^{\beta}}\Big)=\left(
  \begin{array}{cc}
    g_{ij} &\quad 0 \\
    0 &\quad f^{2}h_{i'j'} \\
  \end{array}
\right)
\end{equation*}
and the inverse $(G^{\beta\alpha})$ is given by
\begin{equation}\label{g}
(G^{\beta\alpha})=\left(
     \begin{array}{cc}
       g^{ji} & 0 \\
       0 & f^{-2}h^{j'i'} \\
     \end{array}
   \right)
\end{equation}

\section{Cartan torsion of the twisted product Finsler metric}
In this section, we shall derive the mean Cartan torsion coefficients of the twisted product Finsler metric.
\begin{proposition}\cite{PTN2}
Let $F$ be the twisted product of Finsler metrics $F_{1}$ and $F_{2}$. Then the Cartan torsion coefficients $C_{\alpha\beta\gamma}$ of $F$ are given by

\begin{align}
&C_{ijk}=\mathop{C_{ijk}}\limits^{1},\label{Cijk1}\\
&C_{i'j'k'}=f^{2}\mathop{C_{i'j'k'}}\limits^{2},\label{Cijk2}\\
&C_{i'jk}=C_{ij'k}=C_{ijk'}=C_{i'j'k}=C_{i'jk'}=C_{ij'k'}=0.\label{Cijk3}
\end{align}
\end{proposition}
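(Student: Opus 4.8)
The plan is to compute the Hessian $G_{\alpha\beta}$ of $\tfrac12 F^2$ and then differentiate once more in $y$, exploiting the block structure already recorded in the excerpt. Since $F^2 = F_1^2(x_1,y_1) + f^2(x_1,x_2)\, F_2^2(x_2,y_2)$ and $F_1$ depends only on $(x_1,y_1)$ while $F_2$ depends only on $(x_2,y_2)$, the $y$-derivatives split cleanly: $\partial/\partial y^i$ annihilates $F_2$ and $\partial/\partial y^{i'}$ annihilates $F_1$, while $f$ carries no $y$-dependence at all. So I would start from the stated fundamental tensor
\begin{equation*}
(G_{\alpha\beta}) = \left(\begin{array}{cc} g_{ij} & 0 \\ 0 & f^2 h_{i'j'}\end{array}\right),
\end{equation*}
where $g_{ij}=[\tfrac12 F_1^2]_{y^iy^j}$ depends only on $(x_1,y_1)$ and $h_{i'j'}=[\tfrac12 F_2^2]_{y^{i'}y^{j'}}$ depends only on $(x_2,y_2)$.

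Next I would apply $C_{\alpha\beta\gamma} = \tfrac12\,\partial G_{\alpha\beta}/\partial y^\gamma$ componentwise over the four index types. For the $(ijk)$-block: $\partial g_{ij}/\partial y^k$ is exactly $2\,\mathop{C_{ijk}}\limits^{1}$ by definition of the Cartan torsion of $F_1$, giving \eqref{Cijk1}. For the $(i'j'k')$-block: $\partial(f^2 h_{i'j'})/\partial y^{k'} = f^2\,\partial h_{i'j'}/\partial y^{k'} = 2f^2\,\mathop{C_{i'j'k'}}\limits^{2}$, since $f$ is $y$-independent; this yields \eqref{Cijk2}. For all mixed blocks, one uses that differentiating a pure ``1'' quantity by a ``2'' direction (or vice versa) gives zero: $\partial g_{ij}/\partial y^{k'}=0$ because $g_{ij}$ does not involve $y_2$, and $\partial(f^2 h_{i'j'})/\partial y^{k}=0$ because $h_{i'j'}$ does not involve $y_1$ and $f$ is $y$-free; moreover the off-diagonal entries of $G_{\alpha\beta}$ are identically $0$, so their $y$-derivatives vanish regardless of the differentiation direction. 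This covers all six vanishing families in \eqref{Cijk3}.

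There is essentially no obstacle here: the proof is a direct unwinding of definitions once the block-diagonal form of $G_{\alpha\beta}$ is in hand, and the only point requiring any care is bookkeeping — checking that each of the index patterns $C_{i'jk}$, $C_{ij'k}$, $C_{ijk'}$, $C_{i'j'k}$, $C_{i'jk'}$, $C_{ij'k'}$ indeed reduces to a derivative either of a block that is identically zero or of a ``pure'' block in a transverse direction. I would also note that full symmetry of $C_{\alpha\beta\gamma}$ in its three indices lets us reduce the six mixed cases to just two genuinely distinct ones (one prime among three, or two primes among three), which streamlines the write-up. Since this proposition is quoted from \cite{PTN2}, I would likely present only this short verification rather than belabor it.
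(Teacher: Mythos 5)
Your verification is correct. The paper itself states this proposition as a citation of \cite{PTN2} and gives no proof, so there is nothing to compare against; your argument --- differentiating the block-diagonal fundamental tensor $(G_{\alpha\beta})$ once more in $y$, using that $g_{ij}$ is independent of $y_2$, that $f^2h_{i'j'}$ is independent of $y_1$ (since $f$ carries no $y$-dependence), and that the off-diagonal blocks vanish identically --- is exactly the standard direct computation, and the appeal to the total symmetry of $C_{\alpha\beta\gamma}$ to collapse the six mixed cases to two is a legitimate streamlining.
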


\begin{proposition}
Let $F$ be the twisted product of Finsler metrics $F_{1}$ and $F_{2}$. Then the mean Cartan torsion coefficients $I_{\alpha}$ of $F$ are given by

\begin{align}
&I_{i}=\mathop{I_{i}}\limits^{1},\label{Ii1}\\
&I_{i'}=\mathop{I_{i'}}\limits^{2}.\label{Ii2}
\end{align}
\end{proposition}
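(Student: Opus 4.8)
The plan is to compute the trace $I_\alpha = C_{\alpha\beta\gamma}G^{\beta\gamma}$ directly, splitting the sum over $\beta,\gamma$ according to whether each index lies in the first block ($1,\dots,m$) or the second block ($m+1,\dots,m+n$). The key inputs are the block-diagonal form of $(G^{\beta\alpha})$ in \eqref{g} together with the Cartan torsion formulas \eqref{Cijk1}--\eqref{Cijk3} from the previous proposition. I would also record at the outset the obvious facts $\mathop{I_i}\limits^{1} = \mathop{C_{ijk}}\limits^{1} g^{jk}$ and $\mathop{I_{i'}}\limits^{2} = \mathop{C_{i'j'k'}}\limits^{2} h^{j'k'}$ from the definition \eqref{Ii} applied on $(M_1,F_1)$ and $(M_2,F_2)$ separately.

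First I would treat $I_i$ for a first-block index $i$. Expanding $I_i = C_{i\beta\gamma}G^{\beta\gamma}$, the cross terms where exactly one of $\beta,\gamma$ is primed vanish because $G^{\beta\gamma}$ is block-diagonal, so only $C_{ijk}G^{jk}$ and $C_{ij'k'}G^{j'k'}$ survive. The second of these is zero by \eqref{Cijk3}, and in the first $G^{jk} = g^{jk}$ by \eqref{g} while $C_{ijk} = \mathop{C_{ijk}}\limits^{1}$ by \eqref{Cijk1}; hence $I_i = \mathop{C_{ijk}}\limits^{1} g^{jk} = \mathop{I_i}\limits^{1}$.

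Next I would treat $I_{i'}$ for a second-block index $i'$. By the same block-diagonal argument only $C_{i'jk}G^{jk}$ and $C_{i'j'k'}G^{j'k'}$ survive; the first vanishes by \eqref{Cijk3}. In the second, \eqref{Cijk2} gives $C_{i'j'k'} = f^2 \mathop{C_{i'j'k'}}\limits^{2}$ and \eqref{g} gives $G^{j'k'} = f^{-2} h^{j'k'}$, so the factors $f^2$ and $f^{-2}$ cancel and $I_{i'} = \mathop{C_{i'j'k'}}\limits^{2} h^{j'k'} = \mathop{I_{i'}}\limits^{2}$.

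There is no real obstacle here — the computation is a short bookkeeping exercise — but the one point worth stating carefully is the cancellation of the twisted function $f$ in the second-block case: the factor $f^2$ appearing in $C_{i'j'k'}$ is precisely cancelled by the $f^{-2}$ in the inverse metric $G^{j'k'}$, so the mean Cartan torsion on the second factor is unaffected by the twisting, exactly as in the warped and doubly warped cases. I would also remark that the same splitting trick will be reused verbatim when tracing the Landsberg curvature in Section 4, so it is worth setting up the notation cleanly now.
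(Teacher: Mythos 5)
Your proposal is correct and follows essentially the same route as the paper: expand the trace $I_\alpha = C_{\alpha\beta\gamma}G^{\beta\gamma}$, discard the cross terms via the block-diagonal form of $(G^{\beta\alpha})$ and \eqref{Cijk3}, and use \eqref{Cijk1}--\eqref{Cijk2} with \eqref{g}. The paper only writes out the $I_i$ case and dispatches $I_{i'}$ with ``similarly,'' whereas you make the $f^{2}\cdot f^{-2}$ cancellation explicit, which is the only substantive point in that case.
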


\begin{proof}
According to \eqref{Ii}, we have
\begin{equation}\label{II}
 I_{\alpha}=C_{\alpha\beta\gamma}G^{\beta\gamma}.
\end{equation}
By putting $\alpha=i$ in \eqref{II}, we get
\begin{equation}\label{III}
I_{i}=C_{ijk}G^{jk}+C_{ij'k}G^{j'k}+C_{ijk'}G^{jk'}+C_{ij'k'}G^{j'k'},
\end{equation}
substituting \eqref{g}, \eqref{Cijk1} and \eqref{Cijk3} into \eqref{III}, we obtain

\begin{equation*}
I_{i}=\mathop{C_{ijk}}\limits^{1}g^{jk}=\mathop{I_{i}}\limits^{1}.
\end{equation*}
Similarly, we can obtain \eqref{Ii2}.
\end{proof}

\section{Landsberg curvature of the twisted product Finsler metric}
It is known that a Landsberg metric must be of relatively isotropic Landsberg curvature, however the converse does not hold in general, we want to know whether the converse does hold or not for the twisted product Finsler metric. For this purpose, we shall first give necessary and sufficient conditions for the twisted product Finsler metric to be a Landsberg metric.
\begin{proposition}\cite{PTN2}
Let $F$ be the twisted product of Finsler metrics $F_{1}$ and $F_{2}$. Then the Berwald curvature coefficients $B^{~\lambda}_{\alpha~\beta\gamma}$ of $F$ are given by
\begin{align}
&B^{~l}_{i~jk}=\mathop{B^{~l}_{i~jk}}\limits^{1}+f\mathop{C^{lp}_{~~i;j;k}}\limits^{1}\frac{\partial f}{\partial x^{p}}F_{2}^{2},\label{Bijkl1}\\
&B^{~l}_{i~jk'}=B^{~l}_{i~k' j}=B^{~l}_{k'~ij}=2f\mathop{C^{lp}_{~~i;j}}\limits^{1}\frac{\partial f}{\partial x^{p}}y_{k'},\label{Bijkl2}\\
&B^{~l}_{i~j'k'}=B^{~l}_{j'~ik'}=B^{~l}_{j'~k'i}=2f\mathop{C^{lp}_{~~i}}\limits^{1}\frac{\partial f}{\partial x^{p}}h_{j'k'},\label{Bijkl3}\\
&B^{~l}_{i'~j'k'}=-2f\mathop{C_{i'j'k'}}\limits^{2}g^{lp}\frac{\partial f}{\partial x^{p}},\label{Bijkl4}\\
&B^{~l'}_{i'~j'k'}=\mathop{B^{~l'}_{i'~j'k'}}\limits^{2}+f^{-1}(\mathop{C^{l'p'}_{~~i';j';k'}}\limits^{2}F_{2}^{2}
+2\mathop{C^{l'p'}_{~~j';k'}}\limits^{2}y_{i'}+2\mathop{C^{l'p'}_{~~i';k'}}\limits^{2}y_{j'}+2\mathop{C^{l'p'}_{~~i';j'}}\limits^{2}y_{k'}\nonumber\\
&\qquad\quad\quad+2\mathop{C^{l'p'}_{~~i'}}\limits^{2}h_{j'k'}+2\mathop{C^{l'p'}_{~~j'}}\limits^{2}h_{i'k'}+2\mathop{C^{l'p'}_{~~k'}}\limits^{2}h_{i'j'}-2h^{l'p'}\mathop{C_{i'j'k'}}\limits^{2})\frac{\partial f}{\partial x^{p'}},\label{Bijkl5}\\
&B^{~l'}_{i~jk}=B^{~l'}_{i~jk'}=B^{~l'}_{i~k'j}=B^{~l'}_{k'~ij}=B^{~l'}_{i~j'k'}=B^{~l'}_{j'~ik'}=B^{~l'}_{j'~k'i}=0.\label{Bijkl6}
\end{align}
\end{proposition}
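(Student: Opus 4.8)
The plan is to apply the defining formula $B^{~\lambda}_{\alpha~\beta\gamma}=\partial^{3}\mathbb{G}^{\lambda}/\partial y^{\alpha}\partial y^{\beta}\partial y^{\gamma}$ recorded in \eqref{Lijk}, so the first task is to compute the spray coefficients of the twisted product $F$. Writing $[F^{2}]=F_{1}^{2}(x_{1},y_{1})+f^{2}(x)F_{2}^{2}(x_{2},y_{2})$ and using that $F_{1}^{2}$ is independent of $(x_{2},y_{2})$ while $F_{2}^{2}$ is independent of $(x_{1},y_{1})$, substitution into $\mathbb{G}^{\lambda}=\tfrac{1}{4}G^{\lambda\mu}\{[F^{2}]_{x^{\nu}y^{\mu}}y^{\nu}-[F^{2}]_{x^{\mu}}\}$ together with the block form \eqref{g} of $(G^{\mu\nu})$ yields
\begin{align*}
\mathbb{G}^{l}&=\mathop{\mathbb{G}^{l}}\limits^{1}-\tfrac{1}{2}f\,\tfrac{\partial f}{\partial x^{p}}\,g^{lp}F_{2}^{2},\\
\mathbb{G}^{l'}&=\mathop{\mathbb{G}^{l'}}\limits^{2}+f^{-1}\Big(\tfrac{\partial f}{\partial x^{\alpha}}y^{\alpha}\Big)y^{l'}-\tfrac{1}{2}f^{-1}\tfrac{\partial f}{\partial x^{p'}}h^{l'p'}F_{2}^{2},
\end{align*}
where $\mathop{\mathbb{G}^{l}}\limits^{1}$, $\mathop{\mathbb{G}^{l'}}\limits^{2}$ denote the spray coefficients of $F_{1}$, $F_{2}$; the term $\mathop{\mathbb{G}^{l'}}\limits^{2}$ appears via the identity $[F_{2}^{2}]_{x^{q'}y^{p'}}y^{q'}-[F_{2}^{2}]_{x^{p'}}=4h_{p'k'}\mathop{\mathbb{G}^{k'}}\limits^{2}$, and the remaining summands collect the $x$-derivatives of $f$.

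Next I would differentiate these two formulas three times in the fibre variables, organising the computation by the types (primed / unprimed) of the three lower indices. The elementary inputs are $[F_{1}^{2}]_{y^{i}}=2y_{i}$, $[F_{2}^{2}]_{y^{i'}}=2y_{i'}$, $\partial g_{jk}/\partial y^{i}=2\mathop{C_{ijk}}\limits^{1}$, $\partial g^{jk}/\partial y^{i}=-2\mathop{C^{jk}_{~~i}}\limits^{1}$ and their iterates (which is precisely how the vertical-derivative notation $\mathop{C^{jk}_{~~i;l}}\limits^{1}$, etc., is generated), the analogous identities for $h_{i'j'}$ and $h^{i'j'}$, and the homogeneity relations $\mathop{C_{ijk}}\limits^{1}y^{i}=0$, $\mathop{C_{i'j'k'}}\limits^{2}y^{i'}=0$, which give in particular $\partial_{y^{j'}}y_{k'}=h_{j'k'}$, $\partial_{y^{j'}}\partial_{y^{k'}}F_{2}^{2}=2h_{j'k'}$ and $\partial_{y^{i'}}\partial_{y^{j'}}\partial_{y^{k'}}F_{2}^{2}=4\mathop{C_{i'j'k'}}\limits^{2}$. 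For $\mathbb{G}^{l}$: the part $\mathop{\mathbb{G}^{l}}\limits^{1}$ depends only on $(x_{1},y_{1})$, so its fibre derivatives reproduce $\mathop{B^{~l}_{i~jk}}\limits^{1}$ and kill every derivative carrying a primed index; the part $-\tfrac{1}{2}f\tfrac{\partial f}{\partial x^{p}}g^{lp}F_{2}^{2}$ is a product of a function of $x$, the $(x_{1},y_{1})$-tensor $g^{lp}$, and the $(x_{2},y_{2})$-function $F_{2}^{2}$, so the Leibniz rule distributes the three fibre derivatives between $g^{lp}$ (producing $\mathop{C^{lp}_{~~i}}\limits^{1}$, $\mathop{C^{lp}_{~~i;j}}\limits^{1}$, $\mathop{C^{lp}_{~~i;j;k}}\limits^{1}$) and $F_{2}^{2}$ (producing $y_{k'}$, $h_{j'k'}$, $\mathop{C_{i'j'k'}}\limits^{2}$ via the identities above); collecting terms gives \eqref{Bijkl1}--\eqref{Bijkl4}. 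The same analysis for $\mathbb{G}^{l'}$ gives \eqref{Bijkl5} and \eqref{Bijkl6}: here the key observation is that the middle term $f^{-1}(\tfrac{\partial f}{\partial x^{\alpha}}y^{\alpha})y^{l'}$ is at most linear in $y_{1}$ and at most quadratic in $y_{2}$, hence contributes nothing to $B^{~l'}_{i~jk}$, $B^{~l'}_{i~jk'}$ and $B^{~l'}_{i~j'k'}$ (each of which needs a $y_{1}$-derivative annihilating the $y_{1}$-independent terms) and nothing to $B^{~l'}_{i'~j'k'}$ (a third $y_{2}$-derivative); meanwhile $\mathop{\mathbb{G}^{l'}}\limits^{2}$ contributes only $\mathop{B^{~l'}_{i'~j'k'}}\limits^{2}$, and the last term contributes the bracket in \eqref{Bijkl5}.

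The only genuinely laborious step is $B^{~l'}_{i'~j'k'}=\partial_{y^{i'}}\partial_{y^{j'}}\partial_{y^{k'}}\mathbb{G}^{l'}$, i.e., $-\tfrac{1}{2}f^{-1}\tfrac{\partial f}{\partial x^{p'}}$ times $\partial_{y^{i'}}\partial_{y^{j'}}\partial_{y^{k'}}(h^{l'p'}F_{2}^{2})$. Expanding this triple derivative by the Leibniz rule, repeatedly applying $\partial_{y^{k'}}F_{2}^{2}=2y_{k'}$, $\partial_{y^{j'}}y_{k'}=h_{j'k'}$, $\partial_{y^{i'}}h^{l'p'}=-2\mathop{C^{l'p'}_{~~i'}}\limits^{2}$ and $\partial_{y^{i'}}h_{j'k'}=2\mathop{C_{i'j'k'}}\limits^{2}$, and then symmetrising the mixed vertical derivatives of $\mathop{C^{l'p'}}\limits^{2}$ in $i',j',k'$, one obtains exactly $-2$ times the eight-term expression inside the bracket of \eqref{Bijkl5} (the $\mathop{C^{l'p'}_{~~i';j';k'}}\limits^{2}F_{2}^{2}$ term, three terms pairing one vertical derivative of $\mathop{C^{l'p'}}\limits^{2}$ with a factor $y$, three terms pairing $\mathop{C^{l'p'}}\limits^{2}$ itself with a factor $h$, and the term $-2h^{l'p'}\mathop{C_{i'j'k'}}\limits^{2}$); multiplying through by $-\tfrac{1}{2}f^{-1}\tfrac{\partial f}{\partial x^{p'}}$ and adding $\mathop{B^{~l'}_{i'~j'k'}}\limits^{2}$ from the $\mathop{\mathbb{G}^{l'}}\limits^{2}$ piece gives \eqref{Bijkl5}. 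Keeping track of the combinatorial factors of these eight terms and the relative sign of the $h^{l'p'}\mathop{C_{i'j'k'}}\limits^{2}$ term is the one place where I expect genuine care to be needed; everything else is mechanical once the spray coefficients are established. Finally, the total symmetry of $B^{~\lambda}_{\alpha~\beta\gamma}$ in its three lower indices yields at once all the equalities among permuted-index components recorded in \eqref{Bijkl2}, \eqref{Bijkl3}, \eqref{Bijkl5} and \eqref{Bijkl6}.
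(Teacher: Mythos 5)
Your derivation is correct: the spray coefficients you obtain for the twisted product metric are right, and triple vertical differentiation of them, with the Leibniz-rule bookkeeping you describe, reproduces \eqref{Bijkl1}--\eqref{Bijkl6} exactly (here $y_{k'}$ must be read as $h_{k'j'}y^{j'}$, which is the convention that makes the stated formulas consistent). The paper itself gives no proof of this proposition --- it is quoted from \cite{PTN2} --- but your route, computing $\mathbb{G}^{l}$ and $\mathbb{G}^{l'}$ in terms of $\mathop{\mathbb{G}^{l}}\limits^{1}$, $\mathop{\mathbb{G}^{l'}}\limits^{2}$ and the derivatives of $f$ and then differentiating three times in the fibre variables, is precisely the standard argument used in that reference.
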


\begin{proposition}
Let $F$ be the twisted product of Finsler metrics $F_{1}$ and $F_{2}$. Then the Landsberg curvature coefficients $L_{\alpha\beta\gamma}$ of $F$ are given by
\begin{align}
&L_{ijk}=\mathop{L_{ijk}}\limits^{1}+f\mathop{C^{p}_{~ij;k}}\limits^{1}\frac{\partial f}{\partial x^{p}}F_{2}^{2},\label{Lijk1}\\
&L_{ijk'}=L_{ik'j}=L_{k'ij}=f\mathop{C^{p}_{~ij}}\limits^{1}\frac{\partial f}{\partial x^{p}}y_{k'},\label{Lijk2}\\
&L_{i'j'k'}=\mathop{L_{i'j'k'}}\limits^{2}+f\mathop{C_{i'j'k'}}\limits^{2}y^{p}\frac{\partial f}{\partial x^{p}}+f^{-1}(\mathop{C^{p'}_{~i'j';k'}}\limits^{2}F_{2}^{2}+\mathop{C^{p'}_{~j'k'}}\limits^{2}y_{i'}\nonumber\\
&\qquad\quad\quad+\mathop{C^{p'}_{~i'k'}}\limits^{2}y_{j'}+\mathop{C^{p'}_{~i'j'}}\limits^{2}y_{k'}+\mathop{C_{i'j'k'}}\limits^{2}y^{p'})\frac{\partial f}{\partial x^{p'}},\label{Lijk3}\\
&L_{ij'k'}=L_{j'ik'}=L_{j'k'i}=0.\label{Lijk4}
\end{align}
\end{proposition}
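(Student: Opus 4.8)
The goal is to compute the Landsberg curvature coefficients $L_{\alpha\beta\gamma}$ of the twisted product metric $F$ directly from the defining formula $L_{\alpha\beta\gamma}=-\frac{1}{2}Y_\lambda B^{~\lambda}_{\alpha~\beta\gamma}$, where $Y_\lambda=G_{\lambda\mu}y^\mu$ and the Berwald curvature coefficients $B^{~\lambda}_{\alpha~\beta\gamma}$ are already supplied by the preceding proposition. The plan is to split into the four index types (all unprimed; two unprimed and one primed; one unprimed and two primed; all primed), and in each case contract the appropriate block of $B$ with the lowered-index vector field.

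First I would record that $Y_\lambda=G_{\lambda\mu}y^\mu$ decomposes according to the block-diagonal form of $(G_{\alpha\beta})$: for an unprimed index $Y_l=g_{lm}y^m=\mathop{y_l}\limits^{1}$, while for a primed index $Y_{l'}=f^2 h_{l'm'}y^{m'}=f^2\mathop{y_{l'}}\limits^{2}$. Here one must be careful that $y_{k'}$ appearing in the Berwald formulas \eqref{Bijkl2}--\eqref{Bijkl5} is the $F_2$-lowered vector $h_{k'm'}y^{m'}$, i.e. the ``second-manifold'' lowering, not the $F$-lowering; keeping these two notions of lowering straight is the main bookkeeping hazard. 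Then $L_{ijk}=-\frac12 Y_\lambda B^{~\lambda}_{i~jk}=-\frac12\big(Y_l B^{~l}_{i~jk}+Y_{l'}B^{~l'}_{i~jk}\big)$; by \eqref{Bijkl6} the primed piece vanishes, so substituting \eqref{Bijkl1} and using $-\frac12\mathop{y_l}\limits^{1}\mathop{B^{~l}_{i~jk}}\limits^{1}=\mathop{L_{ijk}}\limits^{1}$ together with $\mathop{y_l}\limits^{1}\mathop{C^{lp}_{~~i;j;k}}\limits^{1}=\mathop{C^{~p}_{i~i;j;k}}\limits^{1}$ — wait, more precisely $\mathop{y_l}\limits^{1}\mathop{C^{lp}_{~~i;j;k}}\limits^{1}=\mathop{C^{~p}_{i\phantom{i};j;k}}\limits^{1}$ after raising only the $p$ index, which the paper writes as $\mathop{C^{p}_{~ij;k}}\limits^{1}$ — one lands on \eqref{Lijk1}. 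The identity $y_l C^{lp}{}_{\cdots}=C^p{}_{i\cdots}$ follows from $y_l C_{ijk}^{1}\,(\text{raised})$ being governed by the homogeneity relation $\mathop{C_{ijk}}\limits^{1}y^i=0$ read through vertical derivatives; I would invoke this to collapse the $y_l$ contraction. For $L_{ijk'}$ I contract \eqref{Bijkl2}: $L_{ijk'}=-\frac12\mathop{y_l}\limits^{1}\cdot 2f\mathop{C^{lp}_{~~i;j}}\limits^{1}\frac{\partial f}{\partial x^p}y_{k'}=-f\mathop{C^{p}_{~ij}}\limits^{1}\frac{\partial f}{\partial x^p}y_{k'}$ — and here the sign needs checking against the stated \eqref{Lijk2}, which suggests instead that $-\frac12 y_l(2f C^{lp}_{~~i;j})$ combines with a compensating sign; I would re-derive the homogeneity contraction carefully, since $\mathop{y_l}\limits^{1}\mathop{C^{lp}_{~~i;j}}\limits^{1}$ is not simply $\mathop{C^{p}_{~ij}}\limits^{1}$ but picks up a correction from differentiating the relation $\mathop{y_l}\limits^{1}\mathop{C^{lp}_{~~i}}\limits^{1}=-\mathop{C^{p}_{~i}}\limits^{1}$ in $y^j$. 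Reconciling this correction term with the clean form in \eqref{Lijk2} and \eqref{Lijk4} (which asserts the doubly-primed-singly-unprimed block vanishes — this is immediate from $y_{l'}B^{~l'}_{i~j'k'}=0$ by \eqref{Bijkl6} and $y_l B^{~l}_{i~j'k'}=-\frac12 y_l\cdot 2f C^{lp}_{~~i}\frac{\partial f}{\partial x^p}h_{j'k'}$ which must therefore also vanish, forcing $\mathop{y_l}\limits^{1}\mathop{C^{lp}_{~~i}}\limits^{1}=0$, i.e. $\mathop{C^{p}_{~i}}\limits^{1}=0$?? — no, that is false in general) tells me the resolution must be that the $F$-lowered $Y_l$ in the mixed blocks interacts with a subtlety I have been glossing.

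Let me restate the correct mechanism: the key identity is $y^\alpha C_{\alpha\beta\gamma}=0$ for $F$ itself, hence $y^i\mathop{C_{ijk}}\limits^{1}=0$ and $y^{i'}\mathop{C_{i'j'k'}}\limits^{2}=0$ separately. Lowering/raising then gives $\mathop{y_l}\limits^{1}\mathop{C^{l}_{~jk}}\limits^{1}=0$; differentiating vertically yields $\mathop{y_l}\limits^{1}\mathop{C^{l}_{~jk;p}}\limits^{1}=-\mathop{C_{pjk}}\limits^{1}$ (modulo raising the $p$), and it is exactly this ``$-\mathop{C_{pjk}}\limits^{1}$'' that produces the $\mathop{C^p_{~jk}}\limits^{1}$-type terms in \eqref{Lijk1}--\eqref{Lijk3} after further raising $p$ with $g^{pq}$ (first manifold) or $h^{p'q'}$ (second manifold). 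For $L_{i'j'k'}$, the computation is the most involved: contract \eqref{Bijkl5} with $Y_{l'}=f^2\mathop{y_{l'}}\limits^{2}$ and \eqref{Bijkl4} with $Y_l=\mathop{y_l}\limits^{1}$; the $f^2$ from $Y_{l'}$ cancels the $f^{-1}$ in \eqref{Bijkl5} to leave an overall $f$, the $\mathop{B^{~l'}_{i'~j'k'}}\limits^{2}$ term gives $\mathop{L_{i'j'k'}}\limits^{2}$ after absorbing a leftover $f^2$-versus-the-$F_2$-homogeneity... actually since on $M_2$ one has $\mathop{L_{i'j'k'}}\limits^{2}=-\frac12\mathop{y_{l'}}\limits^{2}\mathop{B^{~l'}_{i'~j'k'}}\limits^{2}$, the coefficient works out once the spare $f^2=F^2/F_2^2\cdot(\ldots)$ — no: simply $-\frac12 f^2\mathop{y_{l'}}\limits^{2}\mathop{B^{~l'}_{i'~j'k'}}\limits^{2}=f^2\mathop{L_{i'j'k'}}\limits^{2}$, which does not match the bare $\mathop{L_{i'j'k'}}\limits^{2}$ in \eqref{Lijk3}; this discrepancy signals that I have the wrong normalization for $Y_{l'}$ or that \eqref{Bijkl5} already has the $F_2$-homogeneity built in so that contracting with $\mathop{y_{l'}}\limits^{2}$ (not $f^2\mathop{y_{l'}}\limits^{2}$) is what \eqref{Lijk3} reflects. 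Resolving \emph{which} vector to contract with — the $F$-dual $Y_\lambda$, as the definition \eqref{Lijk} literally says via $g_{lm}y^m$ with $g$ the metric of $F$ — versus what makes the stated formulas come out, is the crux, and I expect that once the $Y_{l'}=f^2\mathop{y_{l'}}\limits^{2}$ bookkeeping is done consistently the extra $f^2$'s cancel against $f^{-2}$'s hidden in how $\mathop{C^{l'p'}_{~~\cdots}}\limits^{2}$ raises indices (with $h^{l'p'}$, not $f^{-2}h^{l'p'}$).

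In summary, the proof is: (1) write $L_{\alpha\beta\gamma}=-\frac12(Y_l B^{~l}_{\alpha~\beta\gamma}+Y_{l'}B^{~l'}_{\alpha~\beta\gamma})$ with $Y_l=\mathop{y_l}\limits^{1}$, $Y_{l'}=f^2\mathop{y_{l'}}\limits^{2}$; (2) for each of the four index-type blocks substitute the matching Berwald formula from \eqref{Bijkl1}--\eqref{Bijkl6}; (3) collapse every $\mathop{y_l}\limits^{1}\mathop{C^{lp}_{~~\cdots}}\limits^{1}$ or $\mathop{y_{l'}}\limits^{2}\mathop{C^{l'p'}_{~~\cdots}}\limits^{2}$ contraction using the homogeneity identities $\mathop{y_l}\limits^{1}\mathop{C^{l}_{~ij}}\limits^{1}=0$, $\mathop{y_{l'}}\limits^{2}\mathop{C^{l'}_{~i'j'}}\limits^{2}=0$ and their vertical derivatives (which is what converts the third-derivative Berwald terms into the Cartan-torsion terms appearing in \eqref{Lijk1}--\eqref{Lijk3}); (4) use $-\frac12\mathop{y_l}\limits^{1}\mathop{B^{~l}_{i~jk}}\limits^{1}=\mathop{L_{ijk}}\limits^{1}$ and the analogous $M_2$ relation to identify the leading terms; and (5) observe the mixed blocks \eqref{Lijk4} vanish because $B^{~l'}_{i~\cdots}=0$ and the surviving $Y_l B^{~l}_{i~j'k'}$ contraction kills itself via $\mathop{y_l}\limits^{1}\mathop{C^{l}_{~i}}\limits^{1}=0$. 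The main obstacle I anticipate is step (3): keeping the two lowering conventions ($g$ vs.\ $h$, and the stray factors of $f^2$ from $Y_{l'}$) consistent while tracking the correction terms that arise each time one differentiates a homogeneity relation vertically — it is entirely mechanical but sign- and factor-sensitive, and getting \eqref{Lijk3} exactly right (where first-manifold and second-manifold contributions and three levels of $f$-powers all coexist) is where the calculation is most likely to go wrong.
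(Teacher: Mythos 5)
Your overall strategy is exactly the paper's: contract the Berwald coefficients of Proposition 4.1 with the lowered vector in $L_{\alpha\beta\gamma}=-\frac12 y_{\lambda}B^{~\lambda}_{\alpha~\beta\gamma}$, block by block. But the proposal does not actually carry the computation through: for three of the four blocks you stop at an unresolved sign or factor and explicitly defer the resolution (``the sign needs checking'', ``resolving which vector to contract with \ldots is the crux''), and those resolutions are precisely the content of the proposition. The missing ingredient is the chain of identities obtained by repeatedly differentiating the homogeneity relation $y^{a}\mathop{C_{abi}}\limits^{1}=0$ vertically: first $\mathop{y_{l}}\limits^{1}\mathop{C^{lp}_{~~i}}\limits^{1}=g^{pb}\bigl(y^{a}\mathop{C_{abi}}\limits^{1}\bigr)=0$, so the $L_{ij'k'}$ block in \eqref{Lijk4} vanishes outright --- your worry that this would force some nonzero tensor ``$C^{p}_{~i}$'' to vanish is a red herring, the contraction is zero by the standard identity alone; next $\mathop{y_{l}}\limits^{1}\mathop{C^{lp}_{~~i;j}}\limits^{1}=-\mathop{C^{p}_{~ij}}\limits^{1}$, which fixes the sign in \eqref{Lijk2} via $-\tfrac12\cdot 2f\cdot(-\mathop{C^{p}_{~ij}}\limits^{1})=+f\mathop{C^{p}_{~ij}}\limits^{1}$; and finally $\mathop{y_{l}}\limits^{1}\mathop{C^{lp}_{~~i;j;k}}\limits^{1}=-2\mathop{C^{p}_{~ij;k}}\limits^{1}$, which is exactly the identity the paper invokes to get \eqref{Lijk1}. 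Without stating and applying these, the ``collapse'' in your step (3) is an intention, not a proof.

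The more serious gap is the all-primed block \eqref{Lijk3}, the hardest and most information-rich formula. You correctly observe that contracting \eqref{Bijkl5} with $Y_{l'}=G_{l'\mu}y^{\mu}=f^{2}h_{l'm'}y^{m'}$ would produce $f^{2}\mathop{L_{i'j'k'}}\limits^{2}$ and an overall factor $-\tfrac12 f$ on the parenthesized group, which does not match the bare $\mathop{L_{i'j'k'}}\limits^{2}$ and the $f^{-1}$ prefactor in the stated result --- but you leave this unreconciled. A complete argument must commit to the convention under which $y_{l'}$ in Proposition 4.1 and in the contraction denotes the $F_{2}$-lowering $h_{l'm'}y^{m'}$ (the reading under which the stated coefficients come out, and the one consistent with how \eqref{Lijk3} feeds into \eqref{Ji2}), and then check that the same convention reproduces \eqref{Lijk2} and \eqref{Lijk4}; as written you do neither, so the primed block is simply not established. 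In short: right method, but the derivation terminates in open questions at exactly the points where the proposition's specific coefficients are decided.
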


\begin{proof}
According to \eqref{Lijk}, we have
\begin{equation}\label{LL}
 L_{\alpha\beta\gamma}=-\frac{1}{2}y_{\lambda}B^{~\lambda}_{\alpha~\beta\gamma}.
\end{equation}
By putting $\alpha=i, \beta=j, \gamma=k$ in \eqref{LL}, we get
\begin{equation}\label{LLL}
  L_{ijk}=-\frac{1}{2}(y_{l}B^{~l}_{i~jk}+y_{l'}B^{~l'}_{i~jk}),
\end{equation}
substituting \eqref{Bijkl1} and \eqref{Bijkl6} into \eqref{LLL}, and notice that $y_{l}\mathop{C^{lp}_{~~i;j;k}}\limits^{1}=-2\mathop{C^{p}_{~ij;k}}\limits^{1}$, we obtain
\begin{align*}
L_{ijk}&=-\frac{1}{2}y_{l}(\mathop{B^{~l}_{i~jk}}\limits^{1}+f\mathop{C^{lp}_{~~i;j;k}}\limits^{1}\frac{\partial f}{\partial x^{p}}F_{2}^{2})\\
\,&=\mathop{L_{ijk}}\limits^{1}+f\mathop{C^{p}_{~ij;k}}\limits^{1}\frac{\partial f}{\partial x^{p}}F_{2}^{2}.
\end{align*}
Similarly, we can obtain \eqref{Lijk2}-\eqref{Lijk4}.
\end{proof}

\begin{theorem}
Let $F$ be the twisted product of Finsler metrics $F_{1}$ and $F_{2}$. Then $F$ is a Landsberg metric if and only if $F_{1}$ is Landsberg metric, $F_{2}$ is Riemannian and the equation $\mathop{C^{p}_{~ij}}\limits^{1}\frac{\partial f}{\partial x^{p}}=0$ holds.
\end{theorem}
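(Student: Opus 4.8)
The plan is to prove the theorem by analysing the vanishing of each of the Landsberg curvature coefficient blocks in Proposition 4.2 separately, since $L_{\alpha\beta\gamma}\equiv 0$ is equivalent to the simultaneous vanishing of $L_{ijk}$, $L_{ijk'}$ (together with its permutations), and $L_{i'j'k'}$, while the block $L_{ij'k'}$ is already identically zero by \eqref{Lijk4}. First I would treat the mixed block \eqref{Lijk2}: the condition $L_{ijk'}=f\mathop{C^{p}_{~ij}}\limits^{1}\frac{\partial f}{\partial x^{p}}y_{k'}=0$ must hold for all $y\in\widetilde{M}$. Since $f>0$ and since $y_{k'}=f^{2}\mathop{h_{k'l'}}\limits{}y^{l'}$ is not identically zero on $\widetilde{M}$ (indeed $y_{k'}y^{k'}=f^{2}F_{2}^{2}>0$), contracting with $y^{k'}$ forces $\mathop{C^{p}_{~ij}}\limits^{1}\frac{\partial f}{\partial x^{p}}=0$; conversely this identity makes \eqref{Lijk2} vanish. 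So the mixed block vanishes if and only if $\mathop{C^{p}_{~ij}}\limits^{1}\frac{\partial f}{\partial x^{p}}=0$, which is one of the three asserted conditions.

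Next I would feed this identity back into \eqref{Lijk1}. Observe that $\mathop{C^{p}_{~ij;k}}\limits^{1}=\frac{\partial}{\partial y^{k}}\mathop{C^{p}_{~ij}}\limits^{1}$ is the vertical derivative, and that $\frac{\partial f}{\partial x^{p}}$ is a function on $M$ only (no $y$-dependence), hence differentiating the relation $\mathop{C^{p}_{~ij}}\limits^{1}\frac{\partial f}{\partial x^{p}}=0$ with respect to $y^{k}$ gives $\mathop{C^{p}_{~ij;k}}\limits^{1}\frac{\partial f}{\partial x^{p}}=0$ as well. Therefore the second term in \eqref{Lijk1} drops out automatically once the mixed block vanishes, and $L_{ijk}=\mathop{L_{ijk}}\limits^{1}$. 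Thus, granting $\mathop{C^{p}_{~ij}}\limits^{1}\frac{\partial f}{\partial x^{p}}=0$, the block $L_{ijk}$ vanishes if and only if $\mathop{L_{ijk}}\limits^{1}=0$, i.e. $F_{1}$ is a Landsberg metric.

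It remains to analyse \eqref{Lijk3}. Here I would argue that, granting $\mathop{L_{ijk}}\limits^{1}=0$ and $\mathop{C^{p}_{~ij}}\limits^{1}\frac{\partial f}{\partial x^{p}}=0$ (which together with \eqref{Lijk1} and \eqref{Lijk2} already kill every block except this one), the condition $L_{i'j'k'}=0$ is equivalent to $F_{2}$ being Riemannian. The $\Leftarrow$ direction is immediate: if $F_{2}$ is Riemannian then $\mathop{C_{i'j'k'}}\limits^{2}=0$, hence all the $\mathop{C}\limits^{2}$-terms and their vertical derivatives and their raised-index versions vanish, and $\mathop{L_{i'j'k'}}\limits^{2}=0$, so \eqref{Lijk3} gives $L_{i'j'k'}=0$; combined with the previous two paragraphs this also shows $F$ is Landsberg, which is the "if" part of the theorem. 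For the $\Rightarrow$ direction, assume $L_{i'j'k'}=0$; I want to extract $\mathop{C_{i'j'k'}}\limits^{2}=0$ from \eqref{Lijk3}. The idea is to contract \eqref{Lijk3} suitably with $y^{i'}$, $y^{j'}$, $y^{k'}$ and use the homogeneity identities $\mathop{C_{i'j'k'}}\limits^{2}y^{k'}=0$, $\mathop{C^{p'}_{~i'j'}}\limits^{2}y^{j'}=0$, $y_{i'}y^{i'}=f^{2}F_{2}^{2}$, together with the fact that the $\frac{\partial f}{\partial x^{p}}$ (Latin $p$, running over $M_1$) terms have already been eliminated as a consequence of $\mathop{C^{p}_{~ij}}\limits^{1}\frac{\partial f}{\partial x^{p}}=0$ applied after raising and lowering indices on the first factor — actually, one must be slightly careful: the term $f\mathop{C_{i'j'k'}}\limits^{2}y^{p}\frac{\partial f}{\partial x^{p}}$ in \eqref{Lijk3} carries a Latin $p$ summed over $1,\dots,m$, so it is a priori not obviously zero. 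The cleanest route is probably to first contract \eqref{Lijk3} with $h^{j'k'}$, producing a relation among $\mathop{J_{i'}}\limits^{2}$, $\mathop{I_{i'}}\limits^{2}$ and the twisted terms that, combined with the mean Landsberg analysis of Section 5 (or an independent Deicke-type argument), isolates $\mathop{I_{i'}}\limits^{2}$ and eventually $\mathop{C_{i'j'k'}}\limits^{2}$. The main obstacle I anticipate is exactly this last step: disentangling the many $\mathop{C}\limits^{2}$-terms in \eqref{Lijk3} — the bare term, the three $y_{i'}$-weighted vertical-derivative terms, and $\mathop{L_{i'j'k'}}\limits^{2}$ itself — to conclude that each separately vanishes, since they have different homogeneity degrees in $y_2$ and different tensorial symmetry, so a careful bookkeeping of the $F_2^2$, $y_{i'}$, and $y^{p'}\partial_{p'}f$ factors, and possibly a dimension-by-dimension or "polynomial in $y_2$" degree argument, will be needed to separate them. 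Once $\mathop{C_{i'j'k'}}\limits^{2}=0$ is established, Deicke's theorem gives that $F_2$ is Riemannian, and plugging back shows $\mathop{L_{i'j'k'}}\limits^{2}=0$ automatically, completing the "only if" direction.
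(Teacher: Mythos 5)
Your treatment of the mixed block and of $L_{ijk}$ is correct and is essentially the paper's argument: the vanishing of $L_{ijk'}$ forces $\mathop{C^{p}_{~ij}}\limits^{1}\frac{\partial f}{\partial x^{p}}=0$, vertical differentiation in $y^{k}$ then kills the extra term in $L_{ijk}$, and $F_{1}$ being Landsberg follows. The sufficiency direction is also fine.

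The genuine gap is exactly where you flag your "main obstacle": extracting $\mathop{C_{i'j'k'}}\limits^{2}=0$ from $L_{i'j'k'}=0$. Your proposed route (contracting with $h^{j'k'}$, invoking the mean Landsberg analysis, and then a degree-by-degree separation of the various $\mathop{C}\limits^{2}$-terms) is not carried out and would in any case be harder than necessary, since contraction with $h^{j'k'}$ loses information and the remaining terms are all positively homogeneous of the same degree in $y_{2}$, so a naive "polynomial in $y_{2}$" argument does not separate them. The observation you are missing is the one you yourself half-noticed: the term $f\mathop{C_{i'j'k'}}\limits^{2}y^{p}\frac{\partial f}{\partial x^{p}}$ is the \emph{only} term in \eqref{Lijk3} that depends on the first-factor fibre coordinates $y^{i}$ (everything else — $\mathop{L_{i'j'k'}}\limits^{2}$, $F_{2}^{2}$, the $y_{i'}$'s, the $\mathop{C}\limits^{2}$-derivatives, and the factors of $f$ and $\partial f/\partial x^{p'}$ — is a function of $x$ and $y_{2}$ alone). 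Hence differentiating $L_{i'j'k'}=0$ with respect to $y^{i}$ isolates that single term and gives $\mathop{C_{i'j'k'}}\limits^{2}\frac{\partial f}{\partial x^{i}}=0$ at once; since the paper works with twisted products "in the general sense", where $f$ genuinely depends on $M_{1}$ so that $\frac{\partial f}{\partial x^{i}}\not\equiv 0$, this yields $\mathop{C_{i'j'k'}}\limits^{2}=0$, i.e. $F_{2}$ is Riemannian, and then $\mathop{L_{i'j'k'}}\limits^{2}=0$ holds automatically. Without this step (or an equivalent one) your proof of the "only if" direction is incomplete.
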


\begin{proof}
$F$ is a Landsberg metric if and only if
\begin{equation}\label{thL}
L_{\alpha\beta\gamma}=0.
\end{equation}
According to Proposition 4.2, \eqref{thL} is equivalent to
\begin{numcases}{}
\mathop{L_{ijk}}\limits^{1}+f\mathop{C^{p}_{~ij;k}}\limits^{1}\frac{\partial f}{\partial x^{p}}F_{2}^{2}=0,\label{thL1}\\
f\mathop{C^{p}_{~ij}}\limits^{1}\frac{\partial f}{\partial x^{p}}y_{k'}=0,\label{thL2}\\
\mathop{L_{i'j'k'}}\limits^{2}+f\mathop{C_{i'j'k'}}\limits^{2}y^{p}\frac{\partial f}{\partial x^{p}}+f^{-1}(\mathop{C^{p'}_{~i'j';k'}}\limits^{2}F_{2}^{2}+\mathop{C^{p'}_{~j'k'}}\limits^{2}y_{i'}+\mathop{C^{p'}_{~i'k'}}\limits^{2}y_{j'}\nonumber\\
+\mathop{C^{p'}_{~i'j'}}\limits^{2}y_{k'}+\mathop{C_{i'j'k'}}\limits^{2}y^{p'})\frac{\partial f}{\partial x^{p'}}=0.\label{thL3}
\end{numcases}

Firstly, we prove the necessity. Since $f\neq0$, \eqref{thL2} implies that
\begin{equation}\label{thLf}
\mathop{C^{p}_{~ij}}\limits^{1}\frac{\partial f}{\partial x^{p}}=0.
\end{equation}
Differentiating \eqref{thLf} with respect to $y^{k}$, we get
\begin{equation}\label{thL;}
\mathop{C^{p}_{~ij;k}}\limits^{1}\frac{\partial f}{\partial x^{p}}=0,
\end{equation}
substituting \eqref{thL;} into \eqref{thL1}, then \eqref{thL1} reduces to $\mathop{L_{ijk}}\limits^{1}=0$, which means that $F_{1}$ is a Landsberg metric. Differentiating \eqref{thL3} with respect to $y^{i}$, we obtain
\begin{equation}\label{thLC}
\mathop{C_{i'j'k'}}\limits^{2}\frac{\partial f}{\partial x^{i}}=0.
\end{equation}
Since $\frac{\partial f}{\partial x^{i}}\neq0$, \eqref{thLC} indicates $\mathop{C_{i'j'k'}}\limits^{2}=0$, which means that $F_{2}$ is Riemannian.

Secondly, we prove the sufficiency. Assume that $F_{1}$ is Landsberg metric, $F_{2}$ is Riemannian and $\mathop{C^{p}_{~ij}}\limits^{1}\frac{\partial f}{\partial x^{p}}=0$, it is easy to know that \eqref{thL1}-\eqref{thL3} hold, which means that $F$ is a Landsberg metric.
\end{proof}

\begin{remark}
In Theorem 4.3, if the twisted function $f$ only depends on $M_{1}$, $F$ becomes a warped product Finsler metric, then Theorem 4.3 becomes Theorem 6 in \cite{PTN1}, in other words, Theorem 4.3 is the generalization of Theorem 6 in \cite{PTN1}.
\end{remark}

\begin{theorem}
Every twisted product Finsler metric has relatively isotropic Landsberg curvature if and only if it is a Landsberg metric.
\end{theorem}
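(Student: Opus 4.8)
The plan is to substitute the relatively isotropic Landsberg condition \eqref{isoL} into the component formulas of Proposition 4.2 and of Proposition 3.1, and then repeat the "differentiate to separate directions" mechanism of Theorem 4.3 to kill the twisted terms. Concretely, $L_{\alpha\beta\gamma}+c(x)FC_{\alpha\beta\gamma}=0$ splits, via \eqref{Cijk1}--\eqref{Cijk3}, \eqref{Lijk1}--\eqref{Lijk4}, and the block form of $(G_{\alpha\beta})$, into three nontrivial families of equations: one indexed by $ijk$ (involving $\mathop{L_{ijk}}\limits^{1}$, the term $f\mathop{C^{p}_{~ij;k}}\limits^{1}\partial_{p}f\,F_2^2$, and $c(x)F\mathop{C_{ijk}}\limits^{1}$); one indexed by $ijk'$ (the mixed term $f\mathop{C^{p}_{~ij}}\limits^{1}\partial_p f\, y_{k'}$, with no Cartan contribution since $C_{ijk'}=0$); and one indexed by $i'j'k'$ (involving $\mathop{L_{i'j'k'}}\limits^{2}$, the $f^{-1}(\cdots)\partial_{p'}f$ bundle, $f\mathop{C_{i'j'k'}}\limits^{2}y^p\partial_p f$, and $c(x)Ff^2\mathop{C_{i'j'k'}}\limits^{2}$). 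The $ij'k'$-type equations are automatically satisfied by \eqref{Lijk4} and \eqref{Cijk3}.

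First I would treat the mixed equation $f\mathop{C^{p}_{~ij}}\limits^{1}\frac{\partial f}{\partial x^{p}}y_{k'}=0$: since $f\neq 0$ and $y_{k'}$ is an arbitrary nonzero covector on the $M_2$-factor, this forces $\mathop{C^{p}_{~ij}}\limits^{1}\frac{\partial f}{\partial x^{p}}=0$ exactly as in Theorem 4.3, and differentiating in $y^k$ gives $\mathop{C^{p}_{~ij;k}}\limits^{1}\frac{\partial f}{\partial x^{p}}=0$. Feeding both back into the $ijk$-equation collapses it to $\mathop{L_{ijk}}\limits^{1}+c(x)F\mathop{C_{ijk}}\limits^{1}=0$. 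Here I need to compare the "ambient" factor $F=\sqrt{F_1^2+f^2F_2^2}$ with the intrinsic $F_1$ appearing in the isotropic condition for $F_1$; but for the argument I do not need $c(x)F$ to equal $c(x)F_1$ — I just exploit that the left side, for fixed base point and $M_1$-direction, must hold for all $M_2$-directions, while $\mathop{L_{ijk}}\limits^{1}$ and $\mathop{C_{ijk}}\limits^{1}$ do not depend on $y_2$ and $F$ does. Differentiating $\mathop{L_{ijk}}\limits^{1}+c(x)F\mathop{C_{ijk}}\limits^{1}=0$ with respect to a $y^{k'}$ variable yields $c(x)F_{y^{k'}}\mathop{C_{ijk}}\limits^{1}=0$, and since $F_{y^{k'}}=f^2 F_2 F_{2,y^{k'}}/F$ is not identically zero, we get $c(x)\mathop{C_{ijk}}\limits^{1}=0$; combined with Deicke/non-Riemannian genericity this gives $c(x)=0$ on the (open, dense) set where $F_1$ is non-Riemannian, hence $c\equiv 0$ by continuity, and then $\mathop{L_{ijk}}\limits^{1}=0$.

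Next, with $c\equiv 0$ in hand, the $i'j'k'$-equation becomes precisely equation \eqref{thL3} of Theorem 4.3; differentiating it in $y^i$ (a pure $M_1$-direction, which annihilates $\mathop{L_{i'j'k'}}\limits^{2}$, the $f^{-1}$-bundle and the $y^{p'}$-term, leaving $\mathop{C_{i'j'k'}}\limits^{2}\frac{\partial f}{\partial x^i}$) forces $\mathop{C_{i'j'k'}}\limits^{2}=0$ as in Theorem 4.3, so $F_2$ is Riemannian, and then \eqref{thL3} reduces to $\mathop{L_{i'j'k'}}\limits^{2}=0$. At this point all three families are the Landsberg equations of Theorem 4.3 with the twisted terms vanishing, so $L_{\alpha\beta\gamma}=0$ and $F$ is Landsberg; the converse is trivial by taking $c(x)=0$. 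I expect the main obstacle to be the step that extracts $c\equiv 0$: one must argue carefully that vanishing of $c(x)\mathop{C_{ijk}}\limits^{1}$ (or, after the $y^i$-differentiation, of $c(x)$ times a $\partial f$) really gives $c\equiv 0$ rather than only on a subset — this needs either the standing assumption that $F$ is a \emph{proper} twisted product (so $f$ is genuinely non-constant and $F_1$, $F_2$ are genuinely Finslerian, i.e. not Riemannian on any open set), or an explicit appeal to continuity of $c$ to propagate the vanishing, and I would state that hypothesis explicitly where it is used.
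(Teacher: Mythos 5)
Your decomposition and your overall mechanism (split \eqref{thisoL} into the $ijk$, $ijk'$ and $i'j'k'$ blocks, then differentiate in directions transverse to each block to isolate $c(x)$) are the same as the paper's, and your treatment of the first block is fine --- in fact slightly cleaner than the paper's, since you use the mixed equation $f\mathop{C^{p}_{~ij}}\limits^{1}\frac{\partial f}{\partial x^{p}}y_{k'}=0$ to kill both twisted terms at once, where the paper instead differentiates \eqref{thisoL1} in $y^{i'}$, contracts, substitutes back, and differentiates again to reach $c(x)\frac{1}{2}f^{4}F^{-3}F_{2}^{4}\mathop{C_{ijk}}\limits^{1}=0$.

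However, there is a genuine gap at the step where you conclude $c\equiv 0$: you extract only $c(x)\mathop{C_{ijk}}\limits^{1}=0$, and this gives nothing when $F_{1}$ is Riemannian while $F_{2}$ is not (then $\mathop{C_{ijk}}\limits^{1}\equiv 0$, the set on which your argument forces $c=0$ is empty, and neither continuity nor "genericity" rescues it). Your proposed remedy --- adding a properness hypothesis --- would weaken the theorem, and it is unnecessary: the fix is to run the symmetric argument on the $i'j'k'$ block \emph{before} assuming $c=0$. Differentiating \eqref{thisoL3} twice with respect to pure $M_{1}$-directions $y^{i},y^{j}$ kills $\mathop{L_{i'j'k'}}\limits^{2}$, the $f^{-1}(\cdots)\frac{\partial f}{\partial x^{p'}}$ bundle and the $f\mathop{C_{i'j'k'}}\limits^{2}y^{p}\frac{\partial f}{\partial x^{p}}$ term, leaving $c(x)$ times a manifestly nonzero factor times $\mathop{C_{i'j'k'}}\limits^{2}$; this is exactly the paper's identity $c(x)f^{4}F^{-3}F_{1}^{2}F_{2}^{2}\mathop{C_{i'j'k'}}\limits^{2}=0$. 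Together with your $c(x)\mathop{C_{ijk}}\limits^{1}=0$, non-Riemannianity of $F$ alone (i.e.\ at least one of $\mathop{C_{ijk}}\limits^{1}$, $\mathop{C_{i'j'k'}}\limits^{2}$ nonzero; otherwise $F$ is Riemannian and trivially Landsberg) gives $c=0$, after which your reduction of the remaining equations to the Landsberg system of Theorem 4.3 goes through.
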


\begin{proof}
The sufficiency is obvious, now we prove the necessity. $F$ has relatively isotropic Landsberg curvature if and only if
\begin{equation}\label{thisoL}
L_{\alpha\beta\gamma}+c(x)FC_{\alpha\beta\gamma}=0.
\end{equation}
According to Proposition 3.1 and Proposition 4.2, \eqref{thisoL} is equivalent to
\begin{numcases}{}
\mathop{L_{ijk}}\limits^{1}+f\mathop{C^{p}_{~ij;k}}\limits^{1}\frac{\partial f}{\partial x^{p}}F_{2}^{2}+c(x)F\mathop{C_{ijk}}\limits^{1}=0,\label{thisoL1}\\
f\mathop{C^{p}_{~ij}}\limits^{1}\frac{\partial f}{\partial x^{p}}y_{k'}=0,\label{thisoL2}\\
\mathop{L_{i'j'k'}}\limits^{2}+f\mathop{C_{i'j'k'}}\limits^{2}y^{p}\frac{\partial f}{\partial x^{p}}+f^{-1}(\mathop{C^{p'}_{~i'j';k'}}\limits^{2}F_{2}^{2}+\mathop{C^{p'}_{~j'k'}}\limits^{2}y_{i'}+\mathop{C^{p'}_{~i'k'}}\limits^{2}y_{j'}\nonumber\\
+\mathop{C^{p'}_{~i'j'}}\limits^{2}y_{k'}+\mathop{C_{i'j'k'}}\limits^{2}y^{p'})\frac{\partial f}{\partial x^{p'}}+c(x)Ff^{2}\mathop{C_{i'j'k'}}\limits^{2}=0.\label{thisoL3}
\end{numcases}
 Differentiating \eqref{thisoL1} with respect to $y^{i'}$, we get
\begin{equation}\label{thisoL11}
2f\mathop{C^{p}_{~ij;k}}\limits^{1}\frac{\partial f}{\partial x^{p}}y_{i'}+c(x)f^{2}F^{-1}y_{i'}\mathop{C_{ijk}}\limits^{1}=0,
\end{equation}
contracting \eqref{thisoL11} with $y^{i'}$, we get
\begin{equation}\label{thisoL111}
2f\mathop{C^{p}_{~ij;k}}\limits^{1}\frac{\partial f}{\partial x^{p}}F_{2}^{2}+c(x)f^{2}F^{-1}F_{2}^{2}\mathop{C_{ijk}}\limits^{1}=0.
\end{equation}
Transforming \eqref{thisoL111}, then substituting it into \eqref{thisoL1}, \eqref{thisoL1} can be simplified as
\begin{equation}\label{thisoL1111}
\mathop{L_{ijk}}\limits^{1}+c(x)(F-\frac{1}{2}f^{2}F^{-1}F_{2}^{2})\mathop{C_{ijk}}\limits^{1}=0.
\end{equation}
Differentiating \eqref{thisoL1111} with respect to $y^{i'}$, we obtain
\begin{equation}\label{thisoL11111}
c(x)\frac{1}{2}f^{4}F^{-3}F_{2}^{2}y_{i'}\mathop{C_{ijk}}\limits^{1}=0,
\end{equation}
contracting \eqref{thisoL11111} with $y^{i'}$, we obtain
\begin{equation}\label{thisoL111111}
c(x)\frac{1}{2}f^{4}F^{-3}F_{2}^{4}\mathop{C_{ijk}}\limits^{1}=0.
\end{equation}
Similarly, by \eqref{thisoL3}, we can obtain
\begin{equation}\label{thisoL1111111}
c(x)f^{4}F^{-3}F_{1}^{2}F_{2}^{2}\mathop{C_{i'j'k'}}\limits^{2}=0.
\end{equation}
Notice that $F$ is non-Riemannian, according to \eqref{thisoL111111} and \eqref{thisoL1111111}, hence $c(x)=0$, which means that $F$ is a Landsberg metric.
\end{proof}

\section{Mean Landsberg curvature of the twisted product Finsler metric}
It is known that a weakly Landsberg metric must be of relatively isotropic mean Landsberg curvature, however the converse does not hold in general, similarly we want to know whether the converse does hold or not for the twisted product Finsler metric. For this purpose, we first give necessary and sufficient conditions for the twisted product Finsler metric to be a weakly Landsberg metric.
\begin{proposition}
Let $F$ be the twisted product of Finsler metrics $F_{1}$ and $F_{2}$. Then the mean Landsberg curvature coefficients $J_{\alpha}$ of $F$ are given by
\begin{align}
&J_{i}=\mathop{J_{i}}\limits^{1}+f\mathop{I^{p}_{~;i}}\limits^{1}\frac{\partial f}{\partial x^{p}}F_{2}^{2},\label{Ji1}\\
&J_{i'}=f^{-2}\mathop{J_{i'}}\limits^{2}+(f\mathop{I^{p}}\limits^{1}y_{i'}+f^{-1}\mathop{I_{i'}}\limits^{2}y^{p})\frac{\partial f}{\partial x^{p}}+f^{-3}(\mathop{I^{p'}_{~;i'}}\limits^{2}F_{2}^{2}+
\mathop{I^{p'}}\limits^{2}y_{i'}+\mathop{I_{i'}}\limits^{2}y^{p'})\frac{\partial f}{\partial x^{p'}}.\label{Ji2}
\end{align}
\end{proposition}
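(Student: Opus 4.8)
The plan is to obtain the formulas for $J_\alpha$ by contracting the Landsberg curvature coefficients $L_{\alpha\beta\gamma}$ from Proposition 4.2 with the inverse fundamental tensor $G^{\beta\gamma}$ from \eqref{g}, exactly as in the proofs of Propositions 3.2 and 4.2. Concretely, I would start from $J_\alpha = L_{\alpha\beta\gamma}G^{\beta\gamma}$, which follows from \eqref{Ji}, and split the sum over $\beta,\gamma$ into the four index blocks $(j,k)$, $(j',k)$, $(j,k')$, $(j',k')$, using the block-diagonal form of $(G^{\beta\gamma})$ so that only $G^{jk}=g^{jk}$ and $G^{j'k'}=f^{-2}h^{j'k'}$ survive.

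For $\alpha=i$, this gives $J_i = L_{ijk}g^{jk} + L_{ij'k'}f^{-2}h^{j'k'}$; the second term vanishes by \eqref{Lijk4}, so I substitute \eqref{Lijk1} to get $J_i = \mathop{L_{ijk}}\limits^{1}g^{jk} + f\,\mathop{C^{p}_{~ij;k}}\limits^{1}g^{jk}\,\frac{\partial f}{\partial x^{p}}F_2^2$. The first term is $\mathop{J_i}\limits^{1}$ by \eqref{Ji} applied to $F_1$, and the contraction $\mathop{C^{p}_{~ij;k}}\limits^{1}g^{jk} = \mathop{I^{p}_{~;i}}\limits^{1}$ follows from the definition of mean Cartan torsion \eqref{Ii} (raising an index and using that vertical differentiation commutes with the contraction, since $g^{jk}$ differentiates to the relevant $C$ terms — one must be careful here that $\mathop{C^{p}_{~ij}}\limits^{1}g^{jk}$ has the right symmetry so that contracting the $j,k$ slots of $\mathop{C^{p}_{~ij;k}}\limits^{1}$ reproduces $\mathop{I^p}\limits^{1}{}_{;i}$). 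This yields \eqref{Ji1}.

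For $\alpha=i'$, the contraction is $J_{i'} = L_{i'jk}g^{jk} + L_{i'j'k'}f^{-2}h^{j'k'}$. The first term, using \eqref{Lijk2} in the form $L_{k'ij} = f\,\mathop{C^{p}_{~ij}}\limits^{1}\frac{\partial f}{\partial x^{p}}y_{k'}$ with the appropriate relabelling, contributes $f\,\mathop{C^{p}_{~ij}}\limits^{1}g^{jk}\frac{\partial f}{\partial x^p}y_{i'} = f\,\mathop{I^p}\limits^{1}\frac{\partial f}{\partial x^p}y_{i'}$. The second term comes from \eqref{Lijk3}: contracting that long expression with $f^{-2}h^{j'k'}$ produces $f^{-2}\mathop{L_{i'j'k'}}\limits^{2}h^{j'k'} = f^{-2}\mathop{J_{i'}}\limits^{2}$, plus $f^{-1}\mathop{C_{i'j'k'}}\limits^{2}h^{j'k'}y^p\frac{\partial f}{\partial x^p} = f^{-1}\mathop{I_{i'}}\limits^{2}y^p\frac{\partial f}{\partial x^p}$, plus $f^{-3}$ times the contraction of the parenthesised $C^{2}$-terms with $h^{j'k'}$, which collapses to $(\mathop{I^{p'}_{~;i'}}\limits^{2}F_2^2 + \mathop{I^{p'}}\limits^{2}y_{i'} + \mathop{I_{i'}}\limits^{2}y^{p'})\frac{\partial f}{\partial x^{p'}}$ after using $\mathop{C^{p'}_{~j'k'}}\limits^{2}h^{j'k'} = \mathop{I^{p'}}\limits^{2}$, $\mathop{C^{p'}_{~i'j';k'}}\limits^{2}h^{j'k'} = \mathop{I^{p'}_{~;i'}}\limits^{2}$, and noting that two of the three $y$-weighted terms in \eqref{Lijk3} combine. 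Collecting everything gives \eqref{Ji2}.

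The main obstacle I anticipate is the bookkeeping in the $\alpha=i'$ case: one must track which of the $h^{j'k'}$-contractions of the five inner terms in \eqref{Lijk3} produce $\mathop{I^{p'}}\limits^{2}$ versus $\mathop{I_{i'}}\limits^{2}$ versus trace terms like $\mathop{C_{i'j'k'}}\limits^{2}h^{j'k'}y^{p'} = \mathop{I_{i'}}\limits^{2}y^{p'}$, and verify that the coefficients $1$ on each term (rather than the $2$'s appearing in the Berwald formulas) are correct after the $-\frac12 y_\lambda$ normalisation already applied in Proposition 4.2. A secondary subtlety is justifying $\mathop{C^{p}_{~ij;k}}\limits^{1}g^{jk} = \mathop{I^p}\limits^{1}{}_{;i}$: this requires that the vertical covariant derivative (which here is just $\partial/\partial y^k$) commutes past the $g^{jk}$ contraction up to terms that either vanish or are absorbed, so I would spell out that $\partial_{y^k}(\mathop{C^{p}}\limits^{1}{}_{ij}g^{jk})$ is not what appears — rather the index $i$ is the free differentiation slot, so one is contracting the last two (symmetric) slots of $\mathop{C^p}\limits^{1}{}_{ij;k}$, and the Leibniz correction terms are symmetric in a way that the antisymmetry or the defining identity kills. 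Once these two contraction identities are in hand, the rest is a direct substitution and the result follows.
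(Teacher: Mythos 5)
Your proposal is correct and follows essentially the same route as the paper: contract the $L_{\alpha\beta\gamma}$ of Proposition 4.2 with the block-diagonal $(G^{\beta\gamma})$ from \eqref{g}, treat $\alpha=i$ and $\alpha=i'$ separately, and identify the surviving traces $\mathop{L_{ijk}}\limits^{1}g^{jk}$, $\mathop{C^{p}_{~jk;i}}\limits^{1}g^{jk}$, $\mathop{L_{i'j'k'}}\limits^{2}h^{j'k'}$, etc.\ with $\mathop{J_{i}}\limits^{1}$, $\mathop{I^{p}_{~;i}}\limits^{1}$, $\mathop{J_{i'}}\limits^{2}$ and the mean Cartan terms. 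The paper only writes out the $\alpha=i$ block (after using the symmetry of $L_{ijk}$ to place the contraction on the $j,k$ slots) and dispatches $\alpha=i'$ with ``similarly,'' so your explicit bookkeeping of the second block, including the subtlety of commuting $\partial/\partial y$ past the $g^{jk}$-contraction, is if anything more detailed than the original.
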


\begin{proof}
According to \eqref{Ji}, we have
\begin{equation}\label{JJ}
 J_{\alpha}=L_{\alpha\beta\gamma}G^{\beta\gamma}.
\end{equation}
By putting $\alpha=i$ in \eqref{JJ}, we get
\begin{equation}\label{JJJ}
J_{i}=L_{ijk}G^{jk}+L_{ij'k}G^{j'k}+L_{ijk'}G^{jk'}+L_{ij'k'}G^{j'k'},
\end{equation}
substituting \eqref{g}, \eqref{Lijk1} and \eqref{Lijk4} into \eqref{JJJ}, we obtain
\begin{equation*}
J_{i}=(\mathop{L_{ijk}}\limits^{1}+f\mathop{C^{p}_{~jk;i}}\limits^{1}\frac{\partial f}{\partial x^{p}}F_{2}^{2})g^{jk}=\mathop{J_{i}}\limits^{1}+f\mathop{I^{p}_{~;i}}\limits^{1}\frac{\partial f}{\partial x^{p}}F_{2}^{2}.
\end{equation*}
Similarly, we can obtain \eqref{Ji2}.
\end{proof}

\begin{theorem}
Let $F$ be the twisted product of Finsler metrics $F_{1}$ and $F_{2}$. Then $F$ is a weakly Landsberg metric if and only if $F_{1}$ is weakly Landsberg metric, $F_{2}$ is Riemannian and the equation $\mathop{I^{p}}\limits^{1}\frac{\partial f}{\partial x^{p}}=0$ holds.
\end{theorem}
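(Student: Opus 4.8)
The plan is to run the argument of Theorem 4.3 at the level of the mean quantities, replacing $\mathop{C}\limits^{1}$, $\mathop{L}\limits^{1}$ by their traces $\mathop{I}\limits^{1}$, $\mathop{J}\limits^{1}$ and starting from the formulas of Proposition 5.1. By definition $F$ is weakly Landsberg if and only if $J_{\alpha}=0$ for every $\alpha$, which by Proposition 5.1 splits into the equation $J_{i}=0$ coming from \eqref{Ji1} and the equation $J_{i'}=0$ coming from \eqref{Ji2}.

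For sufficiency, assume $F_{1}$ is weakly Landsberg, $F_{2}$ is Riemannian and $\mathop{I^{p}}\limits^{1}\frac{\partial f}{\partial x^{p}}=0$. Since $F_{2}$ is Riemannian, $\mathop{C_{i'j'k'}}\limits^{2}=0$, hence $\mathop{I_{i'}}\limits^{2}=0$, $\mathop{I^{p'}}\limits^{2}=0$, $\mathop{I^{p'}_{~;i'}}\limits^{2}=0$, and $\mathop{J_{i'}}\limits^{2}=0$ because a Riemannian metric is Berwald and hence Landsberg. Differentiating $\mathop{I^{p}}\limits^{1}\frac{\partial f}{\partial x^{p}}=0$ with respect to $y^{i}$, which is legitimate since $f$ depends only on $x$, gives $\mathop{I^{p}_{~;i}}\limits^{1}\frac{\partial f}{\partial x^{p}}=0$. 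Substituting all of these identities into \eqref{Ji1} and \eqref{Ji2} makes every term vanish, so $J_{\alpha}=0$ and $F$ is weakly Landsberg.

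For necessity, the crucial point is to decouple the contributions of $F_{1}$, $F_{2}$ and $f$ inside the equation $J_{i'}=0$. Along the direction $y^{i}$ the function $f$ and every quantity carrying the label $2$ (including $y_{i'}$ and $y^{p'}$) is constant, while $\partial_{y^{i}}\mathop{I^{p}}\limits^{1}=\mathop{I^{p}_{~;i}}\limits^{1}$ and $\partial_{y^{i}}y^{p}=\delta^{p}_{i}$; hence differentiating \eqref{Ji2} with respect to $y^{i}$ yields $f\mathop{I^{p}_{~;i}}\limits^{1}y_{i'}\frac{\partial f}{\partial x^{p}}+f^{-1}\mathop{I_{i'}}\limits^{2}\frac{\partial f}{\partial x^{i}}=0$. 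Contracting this with $y^{i'}$ and using that $y^{i'}y_{i'}$ is a positive multiple of $F_{2}^{2}$ together with $\mathop{I_{i'}}\limits^{2}y^{i'}=0$ annihilates the second term and leaves $\mathop{I^{p}_{~;i}}\limits^{1}\frac{\partial f}{\partial x^{p}}=0$. Since $\mathop{I^{p}}\limits^{1}$ is positively homogeneous of degree $-1$ in $y_{1}$, Euler's relation gives $y^{i}\mathop{I^{p}_{~;i}}\limits^{1}=-\mathop{I^{p}}\limits^{1}$, so contracting $\mathop{I^{p}_{~;i}}\limits^{1}\frac{\partial f}{\partial x^{p}}=0$ with $y^{i}$ produces exactly $\mathop{I^{p}}\limits^{1}\frac{\partial f}{\partial x^{p}}=0$. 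Feeding $\mathop{I^{p}_{~;i}}\limits^{1}\frac{\partial f}{\partial x^{p}}=0$ into \eqref{Ji1} gives $\mathop{J_{i}}\limits^{1}=0$, i.e. $F_{1}$ is weakly Landsberg; feeding the same identity back into the differentiated form of \eqref{Ji2} leaves $f^{-1}\mathop{I_{i'}}\limits^{2}\frac{\partial f}{\partial x^{i}}=0$, and since $f\neq 0$ and $\partial f/\partial x^{i}\neq 0$ we conclude $\mathop{I_{i'}}\limits^{2}=0$, so $F_{2}$ is Riemannian by Deicke's theorem.

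The step I expect to be the main obstacle is precisely this decoupling: \eqref{Ji2} entangles $\mathop{I^{p}}\limits^{1}$, the $M_{2}$-data $\mathop{J_{i'}}\limits^{2}$, $\mathop{I_{i'}}\limits^{2}$, $\mathop{I^{p'}}\limits^{2}$, and the function $f$, and one must differentiate in a base-$M_{1}$ fibre direction and then contract with $y^{i'}$ to isolate the $M_{1}$-part, after which a homogeneity argument upgrades $\mathop{I^{p}_{~;i}}\limits^{1}\frac{\partial f}{\partial x^{p}}=0$ to the stated identity $\mathop{I^{p}}\limits^{1}\frac{\partial f}{\partial x^{p}}=0$. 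As in Theorem 4.3, one also tacitly uses that the twisted function genuinely depends on $M_{1}$, i.e. that $\partial f/\partial x^{i}$ does not vanish identically, in order to pass from $\mathop{I_{i'}}\limits^{2}\frac{\partial f}{\partial x^{i}}=0$ to $\mathop{I_{i'}}\limits^{2}=0$.
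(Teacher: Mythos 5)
Your proof is correct and follows essentially the same route as the paper: split $J_{\alpha}=0$ into the two component equations, differentiate the mixed equation \eqref{Ji2} in the $y^{i}$ direction to get $(f\mathop{I^{p}_{~;i}}\limits^{1}y_{i'}+f^{-1}\mathop{I_{i'}}\limits^{2}\delta^{p}_{~i})\frac{\partial f}{\partial x^{p}}=0$, isolate $\mathop{I^{p}_{~;i}}\limits^{1}\frac{\partial f}{\partial x^{p}}=0$, and then use homogeneity together with $\partial f/\partial x^{i}\not\equiv 0$ to obtain $\mathop{I^{p}}\limits^{1}\frac{\partial f}{\partial x^{p}}=0$, $\mathop{J_{i}}\limits^{1}=0$ and $\mathop{I_{i'}}\limits^{2}=0$. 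The only (valid) cosmetic difference is in how the cross term is killed: you contract with $y^{i'}$ and invoke $\mathop{I_{i'}}\limits^{2}y^{i'}=0$, whereas the paper differentiates once more with respect to $y^{j}$ and contracts with $y^{j}$, using that $\mathop{I^{p}_{~;i}}\limits^{1}$ is homogeneous of degree $-2$.
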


\begin{proof}
$F$ is a weakly Landsberg metric if and only if
\begin{equation}\label{thJ}
J_{\alpha}=0.
\end{equation}
According to Proposition 5.1, \eqref{thJ} is equivalent to
\begin{numcases}{}
\mathop{J_{i}}\limits^{1}+f\mathop{I^{p}_{~;i}}\limits^{1}\frac{\partial f}{\partial x^{p}}F_{2}^{2}=0,\label{thJ3}\\
f^{-2}\mathop{J_{i'}}\limits^{2}+(f\mathop{I^{p}}\limits^{1}y_{i'}+f^{-1}\mathop{I_{i'}}\limits^{2}y^{p})\frac{\partial f}{\partial x^{p}}+f^{-3}(\mathop{I^{p'}_{~;i'}}\limits^{2}F_{2}^{2}+
\mathop{I^{p'}}\limits^{2}y_{i'}+\mathop{I_{i'}}\limits^{2}y^{p'})\frac{\partial f}{\partial x^{p'}}=0.\label{thJ4}
\end{numcases}

Firstly, we prove the necessity. Differentiating \eqref{thJ4} with respect to $y^{i}$, we get
\begin{equation}\label{thJI}
(f\mathop{I^{p}_{~;i}}\limits^{1}y_{i'}+f^{-1}\mathop{I_{i'}}\limits^{2}\delta^{p}_{~i})\frac{\partial f}{\partial x^{p}}=0,
\end{equation}
differentiating \eqref{thJI} with respect to $y^{j}$, we can get
\begin{equation}\label{thJII}
\mathop{I^{p}_{~;i;j}}\limits^{1}\frac{\partial f}{\partial x^{p}}=0,
\end{equation}
contracting \eqref{thJII} with $y^{j}$, we obtain
\begin{equation}\label{thJIII}
-2\mathop{I^{p}_{~;i}}\limits^{1}\frac{\partial f}{\partial x^{p}}=0.
\end{equation}
By using \eqref{thJIII}, \eqref{thJ3} reduces to $\mathop{J_{i}}\limits^{1}=0$, hence $F_{1}$ is a weakly Landsberg metric. Similarly, using \eqref{thJIII}, \eqref{thJI} can be simplified as
\begin{equation}\label{thJf}
\mathop{I_{i'}}\limits^{2}\frac{\partial f}{\partial x^{i}}=0.
\end{equation}
Since $\frac{\partial f}{\partial x^{i}}\neq0$, \eqref{thJf} indicates $\mathop{I_{i'}}\limits^{2}=0$, which means that $F_{2}$ is Riemannian. Contracting \eqref{thJIII} with $y^{i}$, we obtain
\begin{equation*}
2\mathop{I^{p}}\limits^{1}\frac{\partial f}{\partial x^{p}}=0.
\end{equation*}

Secondly, we prove the sufficiency. Assume that $F_{1}$ is weakly Landsberg metric, $F_{2}$ is Riemannian and $\mathop{I^{p}}\limits^{1}\frac{\partial f}{\partial x^{p}}=0$, it is easy to know that \eqref{thJ3}-\eqref{thJ4} hold, which means that $F$ is a weakly Landsberg metric.
\end{proof}

\begin{remark}
In Theorem 5.2, if the twisted function $f$ only depends on $M_{1}$, $F$ becomes a warped product Finsler metric, then Theorem 5.2 becomes Theorem 8 in \cite{PTN1}, in other words, Theorem 5.2 is the generalization of Theorem 8 in \cite{PTN1}.
\end{remark}

\begin{theorem}
Every twisted product Finsler metric has relatively isotropic mean Landsberg curvature if and only if it is a weakly Landsberg metric.
\end{theorem}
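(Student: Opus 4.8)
The plan is the following. The sufficiency is immediate, since a weakly Landsberg metric has $J_{\alpha}=0$ and so satisfies \eqref{isoJ} with $c(x)\equiv0$. For the necessity, assume $J_{\alpha}+c(x)FI_{\alpha}=0$ for some scalar $c=c(x)$ on $M$. First I would use Proposition 3.2 (for $I_{\alpha}$) and Proposition 5.1 (for $J_{\alpha}$) to split this identity into its $\alpha=i$ part, the $M_{1}$-block
\[
\mathop{J_{i}}\limits^{1}+f\mathop{I^{p}_{~;i}}\limits^{1}\frac{\partial f}{\partial x^{p}}F_{2}^{2}+c(x)F\mathop{I_{i}}\limits^{1}=0,
\]
and its $\alpha=i'$ part, the $M_{2}$-block
\[
f^{-2}\mathop{J_{i'}}\limits^{2}+\Big(f\mathop{I^{p}}\limits^{1}y_{i'}+f^{-1}\mathop{I_{i'}}\limits^{2}y^{p}\Big)\frac{\partial f}{\partial x^{p}}+f^{-3}\Big(\mathop{I^{p'}_{~;i'}}\limits^{2}F_{2}^{2}+\mathop{I^{p'}}\limits^{2}y_{i'}+\mathop{I_{i'}}\limits^{2}y^{p'}\Big)\frac{\partial f}{\partial x^{p'}}+c(x)F\mathop{I_{i'}}\limits^{2}=0.
\]

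The $M_{1}$-block is the ``mean'' analogue of \eqref{thisoL1}--\eqref{thisoL111111} in the proof of Theorem 4.5: after replacing $\mathop{L_{ijk}}\limits^{1},\mathop{C^{p}_{~ij;k}}\limits^{1},\mathop{C_{ijk}}\limits^{1}$ by $\mathop{J_{i}}\limits^{1},\mathop{I^{p}_{~;i}}\limits^{1},\mathop{I_{i}}\limits^{1}$, the same steps apply unchanged. Differentiating with respect to $y^{i'}$ and contracting with $y^{i'}$ (so that $y_{i'}$-factors turn into $F_{2}^{2}$, using $\partial F/\partial y^{i'}=f^{2}y_{i'}/F$) expresses $f\mathop{I^{p}_{~;i}}\limits^{1}\frac{\partial f}{\partial x^{p}}F_{2}^{2}$ in terms of $c(x)\mathop{I_{i}}\limits^{1}$; feeding this back reduces the block to $\mathop{J_{i}}\limits^{1}+c(x)\big(F-\tfrac12 f^{2}F^{-1}F_{2}^{2}\big)\mathop{I_{i}}\limits^{1}=0$, and one more differentiation with respect to $y^{i'}$ followed by contraction gives $c(x)f^{4}F^{-3}F_{2}^{4}\mathop{I_{i}}\limits^{1}=0$.

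For the $M_{2}$-block I would use the same device as in the treatment of \eqref{thisoL3} in Theorem 4.5 --- differentiate along a fibre direction and contract with the same fibre variable, which annihilates every summand homogeneous of degree $0$ in that variable (here $\mathop{J_{i'}}\limits^{2}$ and the whole bracket $\mathop{I^{p'}_{~;i'}}\limits^{2}F_{2}^{2}+\mathop{I^{p'}}\limits^{2}y_{i'}+\mathop{I_{i'}}\limits^{2}y^{p'}$) while converting the degree $-1$ term $\mathop{I_{i'}}\limits^{2}$ into itself with a definite coefficient --- but now it must be iterated in \emph{both} fibre directions, because of the coupling term $f\mathop{I^{p}}\limits^{1}y_{i'}\frac{\partial f}{\partial x^{p}}$, which is built from the $M_{1}$-quantity $\mathop{I^{p}}\limits^{1}$ and has no counterpart in Theorem 4.5. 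Concretely: differentiating the $M_{2}$-block with respect to $y^{l'}$ and contracting with $y^{l'}$ eliminates $\mathop{J_{i'}}\limits^{2}$ and the $\frac{\partial f}{\partial x^{p'}}$-bracket and yields a relation expressing $f\mathop{I^{p}}\limits^{1}y_{i'}\frac{\partial f}{\partial x^{p}}$ through $f^{-1}\mathop{I_{i'}}\limits^{2}y^{p}\frac{\partial f}{\partial x^{p}}$ and $c(x)\mathop{I_{i'}}\limits^{2}$, and substituting it back removes the $\mathop{I^{p}}\limits^{1}$-term; differentiating the resulting identity with respect to $y^{i}$ and contracting with $y^{i}$ (which kills the purely $M_{2}$ $\frac{\partial f}{\partial x^{p'}}$-bracket) expresses $f^{-1}\mathop{I_{i'}}\limits^{2}y^{p}\frac{\partial f}{\partial x^{p}}$ through $c(x)\mathop{I_{i'}}\limits^{2}$, and after the coefficient of $c(x)\mathop{I_{i'}}\limits^{2}$ is simplified using $F^{2}=F_{1}^{2}+f^{2}F_{2}^{2}$ the block becomes $f^{-2}\mathop{J_{i'}}\limits^{2}+f^{-3}(\mathop{I^{p'}_{~;i'}}\limits^{2}F_{2}^{2}+\mathop{I^{p'}}\limits^{2}y_{i'}+\mathop{I_{i'}}\limits^{2}y^{p'})\frac{\partial f}{\partial x^{p'}}+c(x)f^{4}F^{-3}F_{2}^{4}\mathop{I_{i'}}\limits^{2}=0$; a last differentiation with respect to $y^{l'}$ and contraction with $y^{l'}$ then produces a relation $c(x)f^{4}F^{-5}F_{1}^{2}F_{2}^{4}\mathop{I_{i'}}\limits^{2}=0$ (up to a nonzero numerical factor). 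Getting this order of elimination right and checking that each coefficient of $c(x)\mathop{I_{i'}}\limits^{2}$ collapses to a manifestly positive expression is the step I expect to be the main obstacle.

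Finally, since $F$ is non-Riemannian, Deicke's theorem together with Proposition 3.2 shows that $I_{\alpha}=(\mathop{I_{i}}\limits^{1},\mathop{I_{i'}}\limits^{2})$ does not vanish identically, so at least one of $\mathop{I_{i}}\limits^{1}$, $\mathop{I_{i'}}\limits^{2}$ is not identically zero. As $f,F,F_{1},F_{2}>0$ on $\widetilde{M}$, whichever of $c(x)f^{4}F^{-3}F_{2}^{4}\mathop{I_{i}}\limits^{1}=0$ and $c(x)f^{4}F^{-5}F_{1}^{2}F_{2}^{4}\mathop{I_{i'}}\limits^{2}=0$ carries a non-vanishing torsion factor then forces $c(x)\equiv0$. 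Hence $J_{\alpha}=0$, i.e.\ $F$ is a weakly Landsberg metric.
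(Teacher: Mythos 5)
Your proposal is correct and follows the paper's own proof essentially step for step: the same splitting via Propositions 3.2 and 5.1, the same differentiate-in-$y^{i'}$-then-contract device on the $M_1$-block leading to $c(x)\tfrac{1}{2}f^{4}F^{-3}F_{2}^{4}\mathop{I_{i}}\limits^{1}=0$, and the same conclusion from non-Riemannianness of $F$. For the $M_2$-block the paper only writes ``similarly,'' whereas you correctly flag the coupling term $f\mathop{I^{p}}\limits^{1}y_{i'}\frac{\partial f}{\partial x^{p}}$ (absent in the Landsberg case) and eliminate it by a two-stage contraction; the discrepancy between your final coefficient $f^{4}F^{-5}F_{1}^{2}F_{2}^{4}$ and the paper's $f^{4}F^{-3}F_{1}^{4}F_{2}^{2}$ is immaterial, since all that matters is that the factor multiplying $c(x)\mathop{I_{i'}}\limits^{2}$ is positive.
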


\begin{proof}
The sufficiency is obvious, now we prove the necessity. $F$ has relatively isotropic mean Landsberg curvature if and only if
\begin{equation}\label{thisoJ}
J_{\alpha}+c(x)FI_{\alpha}=0,
\end{equation}
According to Proposition 3.2 and Proposition 5.1, \eqref{thisoJ} is equivalent to
\begin{numcases}{}
\mathop{J_{i}}\limits^{1}+f\mathop{I^{p}_{~;i}}\limits^{1}\frac{\partial f}{\partial x^{p}}F_{2}^{2}+c(x)F\mathop{I_{i}}\limits^{1}=0,\label{thisoJ1}\\
\mathop{J_{i'}}\limits^{2}+f(f^{2}\mathop{I^{p}}\limits^{1}y_{i'}+\mathop{I_{i'}}\limits^{2}y^{p})\frac{\partial f}{\partial x^{p}}+f^{-1}(\mathop{I^{p'}_{~;i'}}\limits^{2}F_{2}^{2}+
\mathop{I^{p'}}\limits^{2}y_{i'}+\mathop{I_{i'}}\limits^{2}y^{p'})\frac{\partial f}{\partial x^{p'}}\nonumber\\
+c(x)f^{2}F\mathop{I_{i'}}\limits^{2}=0.\label{thisoJ2}
\end{numcases}
Differentiating \eqref{thisoJ1} with respect to $y^{i'}$, we get
\begin{equation}\label{thisoJ11}
2f\mathop{I^{p}_{~;i}}\limits^{1}\frac{\partial f}{\partial x^{p}}y_{i'}+c(x)f^{2}F^{-1}y_{i'}\mathop{I_{i}}\limits^{1}=0,
\end{equation}
contracting \eqref{thisoJ11} with $y^{i'}$, we get
\begin{equation}\label{thisoJ111}
2f\mathop{I^{p}_{~;i}}\limits^{1}\frac{\partial f}{\partial x^{p}}F_{2}^{2}+c(x)f^{2}F^{-1}F_{2}^{2}\mathop{I_{i}}\limits^{1}=0.
\end{equation}
Transforming \eqref{thisoJ111}, then substituting it into \eqref{thisoJ1}, \eqref{thisoJ1} can be simplified as
\begin{equation}\label{thisoJ1111}
\mathop{J_{i}}\limits^{1}+c(x)(F-\frac{1}{2}f^{2}F^{-1}F_{2}^{2})\mathop{I_{i}}\limits^{1}=0.
\end{equation}
Differentiating \eqref{thisoJ1111} with respect to $y^{i'}$, we obtain
\begin{equation}\label{thisoJ11111}
c(x)\frac{1}{2}f^{4}F^{-3}F_{2}^{2}y_{i'}\mathop{I_{i}}\limits^{1}=0,
\end{equation}
contracting \eqref{thisoJ11111} with $y^{i'}$, we obtain
\begin{equation}\label{thisoJ111111}
c(x)\frac{1}{2}f^{4}F^{-3}F_{2}^{4}\mathop{I_{i}}\limits^{1}=0.
\end{equation}
Similarly, by \eqref{thisoJ2}, we can obtain
\begin{equation}\label{thisoJ1111111}
c(x)f^{4}F^{-3}F_{1}^{4}F_{2}^{2}\mathop{I_{i'}}\limits^{2}=0.
\end{equation}
Notice that $F$ is non-Riemannian, according to \eqref{thisoJ111111} and \eqref{thisoJ1111111}, hence $c(x)=0$, which means that $F$ is a weakly Landsberg metric.
\end{proof}

\begin{flushleft}
\Large\textbf{Acknowledgment}
\end{flushleft}
This work is supported by National Natural Science Foundation of China (Grant Nos. 12261088, 11761069).

\end{document}